\newtheorem{theorem}{Theorem} 
\newtheorem{assume}{Assumption}
\newtheorem{lemma}{Lemma}
\begin{document}

\title{Two-Stage Dual Dynamic Programming with Application to Nonlinear Hydro Scheduling}

\author{Benjamin~Flamm,
        Annika~Eichler,~\IEEEmembership{Member,~IEEE},
        Joseph~Warrington,~\IEEEmembership{Member,~IEEE},
        and~John~Lygeros,~\IEEEmembership{Fellow,~IEEE} 
\thanks{The authors are with the Automatic Control Laboratory of ETH Zürich, Physikstrasse 30, 8092 Zürich, Switzerland. {\tt\small \{flammb, eichlean, warrington, lygeros\}@control.ee.ethz.ch}}
}

%

\maketitle

\begin{abstract}
We present an approximate method for solving nonlinear control problems over long time horizons, in which the full nonlinear model is preserved over an initial part of the horizon, while the remainder of the horizon is modeled using a linear relaxation. As this approximate problem may still be too large to solve directly, we present a Benders decomposition-based solution algorithm that iterates between solving the nonlinear and linear parts of the horizon. This extends the Dual Dynamic Programming approach commonly employed for optimization of linearized hydro power systems. We prove that the proposed algorithm converges after a finite number of iterations, even when the nonlinear initial stage problems are solved inexactly. We also bound the suboptimality of the split-horizon method with respect to the original nonlinear problem, in terms of the properties of a map between the linear and nonlinear state-input trajectories. We then apply this method to a case study concerning a multiple reservoir hydro system, approximating the nonlinear head effects in the second stage using McCormick envelopes. We demonstrate that near-optimal solutions can be obtained in a shrinking horizon setting when the full nonlinear model is used for only a short initial section of the horizon. For this example, the approach is shown to be more practical than both conventional dynamic programming and a multi-cell McCormick envelope approximation from literature. 
\end{abstract}

\begin{IEEEkeywords}
dual dynamic programming, optimal control, nonlinear model predictive control, hydro optimization.
\end{IEEEkeywords}

%
\IEEEpeerreviewmaketitle

\section{Introduction}
%
%
%
\IEEEPARstart{T}{he} control of energy storage devices over a long planning horizon is gaining in importance, as the number of renewable and variable energy sources on the electric power grid increases. U.S. installed wind and solar generation capacity increased by 11\% and 52\% respectively in 2016 \cite{USDoe2016}. Energy storage is particularly well-suited to complement the resulting unpredictable power generation. Long-term energy storage exists in different forms, with varying technical maturity and efficiency \cite{Chen2009}. Examples include natural gas for heating, compressed air electrical storage, ground thermal energy storage, and hydro reservoir systems; the latter have been extensively utilized due to their large storage capacity, technological maturity, and proliferation. Key challenges for the integration of seasonal storage into next generation energy systems are the long horizons and nonlinear system dynamics, which can render long-term control of the storage computationally difficult. 

We typically wish to operate long-term storage devices in response to underlying energy demand or supply patterns that are cyclical over a period of months or years. Examples of such patterns include yearly snow melt, as well as seasonal heating and cooling demands. 

Several methods have been proposed to tackle nonlinear seasonal storage problems. Many involve approximating the nonlinear dynamics through modeling simplifications and heuristic methods such as timescale separation \cite{Abgottspon2016}. Specifically for hydro optimization, a fundamental difficulty is the presence of nonlinear head effects when converting between stored water and electrical energy. Many papers ignore nonlinear head effects and represent energy conversion as a constant efficiency \cite{Pritchard2005}, \cite{Rotting1992}. Methods that do account for head effects usually generate convex hulls of the power production function, either by fitting a set of piecewise linear constraints to the true model \cite{Diniz2008}, \cite{Borghetti2008}, or by rewriting nonlinear terms using convex approximations such as McCormick envelopes \cite{Cerisola2012}. 

Other methods take advantage of improvements in computing power, which allows increasingly large nonlinear control problems to be solved exactly. Approaches that use nonlinear models usually consider only short horizons \cite{Catalao2009}. Techniques such as spatial branch-and-bound and dynamic programming allow for the solution of nonlinear problems to high precision, but scale poorly with the state space size. 

Dual dynamic programming (DDP), also known as multistage Benders decomposition, was introduced as a method for seasonal hydro storage scheduling in \cite{Pereira1991}, but has since been used primarily for solving linear approximations of these problems. DDP was extended to convex problems using generalized Benders decomposition in \cite{Grossmann1991}. Roughly speaking, DDP bounds the value function of a convex problem by a piecewise affine function of the initial state. Since DDP cannot solve problems with nonconvex value functions, convex approximations of the model are required when using DDP for nonlinear problems. Recent extensions of this approach have considered integer programs \cite{Zou2018}, locally-valid Benders cuts \cite{AbgottsponThesis2015}, and polynomial-based moment relaxation \cite{Hohmann2018}.

In \cite{Flamm2018}, a method was introduced to treat general receding horizon nonlinear optimal control problems. The full problem horizon was split into short- and long-term parts, with high model accuracy in the short term, and reduced model accuracy in the long term. Here we expand this approach by introducing several convergence and optimality results related to solving this approximation of the underlying nonlinear problem. Unlike \cite{Flamm2018}, in Theorem \ref{thm:local_solution_ddp_convergence} we provide a guarantee on convergence of the proposed DDP algorithm, even when the nonlinear first-stage problem is solved suboptimally. Additionally, inspired by \cite{Guigues2018}, where linear subproblems were solved to a known tolerance in DDP, in Theorems \ref{thm:two_stage_error_bound} and \ref{theorem:two_stage_argument_optimality_bound} we provide bounds on the suboptimality of the solution of the two-stage approximation. These bounds are a function of the first-stage nonlinear solver tolerance as well as the approximation error between linear and nonlinear models in the second stage. We also demonstrate the generality of the approach by applying it to a different kind of nonlinearity than in \cite{Flamm2018}, involving bilinear as opposed to integer terms. Finally, we quantify how the control horizon (the decision length used each time the model predictive control (MPC) problem is solved) and modeling accuracy affect optimality in simulation.

The underlying stochasticity of real-time  optimal control problems is an additional source of computational complexity, and lends itself to the use of DDP. This can be treated by introducing scenarios to capture different potential realizations of the stochastic processes \cite{Rebennack2016}. With additional variables introduced for different scenarios, long-term problems with even simple models can push the limits of tractability. While much of the hydro optimization literature incorporates stochasticity, we consider a deterministic setting here to focus on the underlying computational method and treatment of nonlinearity.

This paper is structured as follows. Section II introduces the general case of the split-horizon approximate problem, as well as a DDP-based algorithm to solve it. The optimality of the solution produced by the algorithm is derived as a function of the optimality of the first-stage solution. In Section III, a bound is found for the error of the two-stage approximation relative to the exact problem. Section IV formulates the nonlinear hydro optimal control problem, providing specific models and approximations of the underlying hydro system. In Section V, we analytically determine the error bound on McCormick envelope approximations of bilinear terms occurring in the problem. Section VI presents simulation results for a representative hydro system, illustrating the computational benefits of the multistage method. Section VII concludes with analysis of possible improvements and extensions to the proposed method.

\section{Split-Horizon Problem and DDP Algorithm}

\subsection{General Problem Formulation}
We wish to solve a generic discrete-time optimal control problem for a system with dynamics $f_t$, input and state constraints defined by the set $\mathcal{Z}_t$, and cost function $g_t$ over a horizon of length $T$. The state trajectory is denoted by $(x_{0},\ldots,x_{T})$, while the input trajectory is $(u_0,\ldots,u_{T-1})$, where $x_{t} \in \mathbb{R}^{n_t}$ and $u_t \in \mathbb{R}^{m_t}$. The optimal inputs are determined by solving the nonlinear program (NLP)
\begin{subequations} \label{eq:original_minlp}
\begin{align} 
\min_{\substack{x_{1},\ldots,x_{T} \\ u_{0},\ldots,u_{T-1} }} \
& \sum_{t=0}^{T-1} g_t(x_t,u_t) + g_T(x_{T}) 
\\ \label{eq:original_minlp_dynamics}
\text{s.t.} \quad
&  x_{t+1} = f_t(x_t,u_t), & t = 0, \ldots, T-1
\\ \label{eq:original_minlp_set_constraint}
&  (x_t,u_t) \in \mathcal{Z}_t, & t = 0, \ldots, T-1
\\
& x_0 \text{ given,}
\end{align}
\end{subequations}
where $f_t(x_t,u_t)$, $g_t(x_t,u_t)$, and $g_T(x_{T})$ are general nonlinear functions on the set $ \mathcal{Z}_t$. The dynamics, costs, and constraints can all be time-varying. This formulation is general in terms of variable type. For example, integer variables can be considered by adding constraints to $Z_t$.

Note that this is a quite general nonlinear programming problem whose complexity grows rapidly in the dimension of the state and input, as well as the horizon length.

\subsection{Two-Stage Approximation}

In \cite{Flamm2018}, to solve \eqref{eq:original_minlp} over a long horizon, the authors proposed solving an approximate problem consisting of a short initial stage of length $T_1$ where the exact NLP holds, and a subsequent longer stage of length $T-T_1$ where the problem is approximated by a linear program (LP):
\begin{subequations} \label{eq:original_two_stage_problem}
\begin{align} 
\min_{\substack{x_1,\ldots,x_{T_1} \\ u_0,\ldots,u_{T_1-1} \\  \tilde{x}_{T_1+1},\ldots,\tilde{x}_T  \\ \tilde{u}_{T_1},\ldots,\tilde{u}_{T-1}}  } \
& \sum_{t=0}^{T_1 - 1} g_t(x_t,u_t) + \sum_{t=T_1}^{T-1} \! ( c_t^{\top} \tilde{x}_t + d_t^{\top} \tilde{u}_t ) + c_T^\top \tilde{x}_{T}  
\\ \label{eq:nonlinear_first_stage_constraints}
\text{s.t.} \quad & \begin{rcases} x_{t+1} = f_t(x_t,u_t), \quad \
\\ 
(x_t,u_t) \in \mathcal{Z}_t,
\end{rcases} \quad t = 0, \ldots, T_1-1
\\ 
& \begin{rcases}
\tilde{x}_{t+1} = A_t \tilde{x}_t + B_t \tilde{u}_t, 
\\ \label{eq:linear_second_stage_constraints}
E_t \tilde{x}_t + F_t \tilde{u}_t \leq h_t,
\end{rcases} \quad t = T_1, \ldots, T-1
\\ \label{eq:intermediate_state_coupling}
& x_{T_1} = \tilde{x}_{T_1},
\\
& x_{0} \text{ given.}
\end{align}
\end{subequations}

Here, the first stage of the problem, from $t = 0$ to $T_1-1$, has the exact nonlinear cost function, dynamics, and constraints. In the second stage of the problem, these are replaced with linear costs, dynamics, and constraints. 

Note that the state of the approximate linear part at time $t$ is denoted by $\tilde{x}_t$ (instead of $x_t$), to stress that the second-stage approximation is different from the original problem, potentially incorporating different state variables at each timestep (for example, a relaxation of integer variables in the original problem to real-valued variables in the linear part). However, by \eqref{eq:intermediate_state_coupling}, the states $x_t$ and $\tilde{x}_t$ coincide at $t = T_1$.

We now recast the nonlinear approximate problem \eqref{eq:original_two_stage_problem} as an equivalent two-stage problem. The nonlinear first stage can be written as
\begin{subequations} \label{eq:first_stage_problem_with_terminal_cost}
\begin{align} \label{eq:first_stage_objective}
\min_{\substack{x_1,\ldots,x_{T_1} \\ u_0,\ldots,u_{T_1-1} }} \
&  \sum_{t=0}^{T_1 - 1} g_t(x_t,u_t) + \tilde{G}_{T_1}(x_{T_1}) 
\\
\text{s.t.} \quad
&  x_{t+1} = f_t(x_t,u_t), \quad t = 0, \ldots, T_1-1
\\ \label{eq:first_stage_feasible_set}
&  (x_t,u_t) \in \mathcal{Z}_t, \quad \quad \ \, \  t = 0, \ldots, T_1-1
\\
&x_{0} \text{ given.}
\end{align}
\end{subequations}

In \eqref{eq:first_stage_objective}, $\tilde{G}_{T_1}(x_{T_1})$ is a value function that represents the cost of the linear second stage of \eqref{eq:original_two_stage_problem} as a function of the state at the end of the first stage, with
\allowdisplaybreaks
\begin{subequations} \label{eq:second_stage_problem}
\begin{align} 
\tilde{G}_{T_1}(x_{T_1}) = \min_{\substack{\tilde{x}_{T_1+1},\ldots,\tilde{x}_{T} \\ \tilde{u}_{T_1},\ldots,\tilde{u}_{T-1} }} \
& \sum_{t=T_1}^{T-1} \left( c_{t}^{\top} \tilde{x}_t + d_t^{\top} \tilde{u}_t \right) + c_T^\top \tilde{x}_{T}
\\ \label{eq:second_stage_equality}
\text{s.t.} \quad
& \tilde{x}_{t+1} = A_t \tilde{x}_t + B_t \tilde{u}_t,
\\ \label{eq:second_stage_inequality}
& E_t \tilde{x}_t + F_t \tilde{u}_t \leq h_t,
\\ \nonumber
& \quad \ t = T_1, \ldots, T-1,
\\ 
& \tilde{x}_{T_1} = x_{T_1} \text{ given.}
\end{align}
\end{subequations}

The division of the horizon into the two stages and intermediate value function is depicted in Figure~\ref{fig:ddp_method_illustration}.

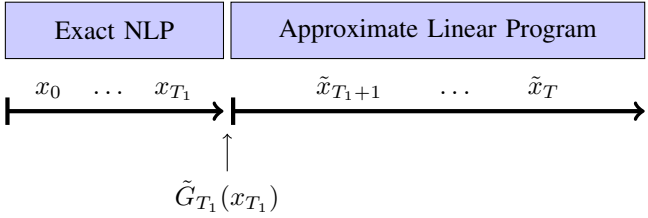
\begin{figure} 
	\begin{centering}
	\begin{tikzpicture}
		\filldraw[fill=blue!20, draw=black] (0,0) rectangle (2.9,0.75) node[pos=.5] {Exact NLP};
  		\filldraw[fill=blue!20, draw=black] (3.0,0) rectangle (8.5,0.75) node[pos=.5] {Approximate Linear Program};
    
  		\draw[|->,ultra thick] (0,-0.7)--(2.9,-0.7) node[above,pos=.5]{$x_{0} \quad \ldots \quad x_{T_1}$};
  
  		\draw[|->,ultra thick] (3.0,-0.7)--(8.5,-0.7) node[above,pos=.5]{$\tilde{x}_{T_1+1} \quad \quad \ldots \quad \quad \tilde{x}_{T}$};
  
  		\draw[->] (2.95,-1.5)--(2.95,-1) node[below,pos=0]{$\tilde{G}_{T_1}(x_{T_1})$};
	\end{tikzpicture}
	
	\caption{Diagram of split-horizon method, with exact short-term nonlinear problem (NLP) and intermediate value function representing the approximate linear long-term problem.}
	\label{fig:ddp_method_illustration}
	\end{centering}
\end{figure}

To ensure that the subsequent theory and analysis is meaningful, we assume that both \eqref{eq:original_minlp} and \eqref{eq:original_two_stage_problem} are feasible. We additionally make the following assumption, which facilitates the iterative solution of the two-stage problem.
\begin{assume} \label{assumption:second_stage_feasibility}
The linear second-stage problem \eqref{eq:second_stage_problem} is feasible for all $x_{T_1}$ that are feasible for the first-stage problem \eqref{eq:first_stage_problem_with_terminal_cost}.
\end{assume}
This assumption, known in the mathematical programming literature as \textit{complete recourse}, is not strictly necessary, as the theory related to Benders decomposition used here also works for problems where the second stage is not feasible for a particular $x_{T_1}$. In that case, Benders feasibility cuts on $x_{T_1}$ can be iteratively added to the first-stage problem \eqref{eq:first_stage_problem_with_terminal_cost}, as is done in \cite{Grossmann1991}. However, for simplicity we omit this case here.

\subsection{Split-Horizon Approximate DDP Algorithm}
\begin{algorithm} 
\caption{Split-horizon DDP with approximate solution to nonlinear first stage} \BlankLine
 
{Define:} \parbox[t]{7cm}{$T_1$: length of nonlinear first stage\,;
\\ $x_0$: starting system state}

\BlankLine
{Initialize:} \parbox[t]{7cm}{$\mathcal{H} = \emptyset$\,;
\\ $UB = +\infty$, $LB = -\infty$\,;}

\BlankLine
1. Solve first-stage NLP \eqref{eq:first_stage_problem_with_terminal_cost}, with $\tilde{G}_{T_1}(x_{T_1}) = 0$: $\left((u_0,\ldots,u_{T_1-1}),(x_1,\ldots,x_{T_1}) \right)$ $\gets$ feasible solution to \eqref{eq:first_stage_problem_with_terminal_cost}\;;\BlankLine

\While{(true)}{ 
2. Solve second-stage linear program \eqref{eq:second_stage_problem} with argument $x_{T_1}$ as found in previous step. Returned arguments are $(\tilde{u}_{T_1},\ldots,\tilde{u}_{T-1})$ and $(\tilde{x}_{T_1+1}, \ldots, \tilde{x}_T)$. Let dual variables $\lambda$ and $\nu$ correspond to \eqref{eq:second_stage_equality} and \eqref{eq:second_stage_inequality} at $t=T_1$\;;\BlankLine
3. Set $UB = \sum_{t=T_1}^{T-1} \left( c_t^{\top} \tilde{x}_t + d_t^{\top} \tilde{u}_t \right) + c_T^\top \tilde{x}_T$\,;\BlankLine
4. \If{$UB = LB$}{\textbf{break\,;}} \BlankLine
5. Let $a = A^\top_{T_1} \lambda + E^\top_{T_1} \nu + c_{T_1}$ and $b = - \nu^\top h_{T_1}$, and update collection of hyperplanes bounding $\tilde{G}_{T_1}(x_{T_1})$ as $\mathcal{H} = \mathcal{H} \cup \{(a,b)\}$\,;\BlankLine
6. Solve first-stage NLP \eqref{eq:first_stage_problem_with_terminal_cost}, replacing $\tilde{G}_{T_1}(x_{T_1})$ by $\underline{G}_{T_1}(x_{T_1}) = \max_{(a,b) \in \mathcal{H}}(a^\top x_{T_1} + b)$: $\left((u_0,\ldots,u_{T_1-1}),(x_1,\ldots,x_{T_1}) \right)$ $\gets$ feasible solution to \eqref{eq:first_stage_problem_with_terminal_cost}\;;\BlankLine
7. Set $LB = \underline{G}_{T_1}(x_{T_1})$\,;\BlankLine
}
\BlankLine
{Output:} \parbox[t]{7cm}{$u_\text{full} = (u_0,\ldots,u_{T_1-1},\tilde{u}_{T_1},\ldots,\tilde{u}_{T-1})$\,;
\\ $x_\text{full} = (x_1,\ldots,x_{T_1},\tilde{x}_{T_1+1},\ldots,\tilde{x}_T)$\,;}
\label{alg:mpc_ddp_algorithm}
\end{algorithm}

In \cite{Flamm2018}, we presented an algorithm that converged in a finite number of iterations to an optimum of the approximate two-stage problem \eqref{eq:original_two_stage_problem}, provided the first-stage NLP \eqref{eq:first_stage_problem_with_terminal_cost} was solved to global optimality. In certain cases, it may not be desirable or possible to solve \eqref{eq:first_stage_problem_with_terminal_cost} to global optimality, due to an algorithmic choice of solution tolerance \cite{Guigues2018}, \cite{Rahmaniani2017}, or constraints on available solution time. 

Algorithm \ref{alg:mpc_ddp_algorithm} presents an adaptation of this setting, where the first-stage NLP \eqref{eq:first_stage_problem_with_terminal_cost} is not solved to global optimality. Before analyzing the convergence of this algorithm, as described in Theorem \ref{thm:local_solution_ddp_convergence}, we first explain the value function approximation used therein. We wish to solve a two-stage problem \eqref{eq:original_two_stage_problem} using DDP. To do this, we iteratively solve the two stages of the problem, using DDP to progressively construct a set of lower-bounding hyperplanes for the second-stage value function $\tilde{G}_{T_1}(x_{T_1})$. The lower-bounding hyperplanes are found using the following lemma.

\begin{lemma} \label{lemma:ddp_lb}
(Lemma 1 in \cite{Flamm2018}) Given a solution to \eqref{eq:second_stage_problem} for some $x_{T_1}$, let $\lambda$ and $\nu$ be the dual variables corresponding to the constraints \eqref{eq:second_stage_equality} and \eqref{eq:second_stage_inequality} at timestep $t=T_1$. Then, $\left(A^\top_{T_1} \lambda + E^\top_{T_1} \nu + c_{T_1} \right)^\top x_{T_1} - \nu^\top h_{T_1}$ is a lower bound on $\tilde{G}_{T_1}(x_{T_1})$ for all $x_{T_1}$.
\end{lemma}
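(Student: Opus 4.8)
The plan is to treat \eqref{eq:second_stage_problem} as a linear program whose optimal value $\tilde{G}_{T_1}(x_{T_1})$ is parametric in $x_{T_1}$, and to read a supporting hyperplane off its dual solution. The key structural observation is that $x_{T_1}$ enters the second-stage LP \emph{only} through the data of the $t=T_1$ constraints: via the linear cost term $c_{T_1}^\top x_{T_1}$, via the right-hand side $A_{T_1} x_{T_1}$ of the dynamics \eqref{eq:second_stage_equality} at $t=T_1$ (after substituting $\tilde{x}_{T_1}=x_{T_1}$), and via the right-hand side $h_{T_1}-E_{T_1} x_{T_1}$ of the inequality \eqref{eq:second_stage_inequality} at $t=T_1$. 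Since the parameter appears only in the objective offset and in constraint right-hand sides, $\tilde{G}_{T_1}$ is convex (indeed piecewise affine) in $x_{T_1}$, so any subgradient at a point generates a globally valid affine lower bound, which is precisely what a Benders cut supplies.

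First I would form the Lagrangian dual of \eqref{eq:second_stage_problem} and note that the \emph{dual feasible region does not depend on} $x_{T_1}$, because $x_{T_1}$ enters only the dual objective (through the right-hand sides it scales) and never the dual constraints. Hence the optimal multipliers obtained from the solve at a particular $x_{T_1}$ remain dual-feasible for every other value of the parameter, and weak duality yields, for all $x_{T_1}$, a lower bound on $\tilde{G}_{T_1}(x_{T_1})$ equal to the dual objective evaluated at this fixed dual point, which is affine in $x_{T_1}$. To identify its slope I would use the KKT stationarity condition for $\tilde{x}_{T_1}$ together with the coupling $\tilde{x}_{T_1}=x_{T_1}$: collecting the cost coefficient $c_{T_1}$ and the contributions $A_{T_1}^\top\lambda$ and $E_{T_1}^\top\nu$ of the $t=T_1$ dynamics and inequality constraints reproduces the claimed gradient $A_{T_1}^\top\lambda + E_{T_1}^\top\nu + c_{T_1}$ (with the dynamics written so that $\lambda$ enters with a positive sign).

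The main obstacle is pinning down the constant term and showing it equals $-\nu^\top h_{T_1}$. The clean route is to evaluate the dual objective at the optimal primal-dual pair via complementary slackness: at the optimizer $(\tilde{x}^\star,\tilde{u}^\star)$ the dynamics residuals vanish and $\nu^\top(E_{T_1}x_{T_1}+F_{T_1}\tilde{u}^\star_{T_1}-h_{T_1})=0$, so the variable-dependent Lagrangian terms cancel and the parameter-independent contribution of the $t=T_1$ inequality survives as the intercept $-\nu^\top h_{T_1}$. I would carry out this cancellation carefully, tracking the sign convention for $\lambda$ and confirming that the remaining ($t>T_1$) second-stage constraints contribute nothing to the constant beyond $-\nu^\top h_{T_1}$; this bookkeeping, rather than any deep idea, is where the care is required. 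Assembling the slope and intercept then gives the affine function $\left(A_{T_1}^\top\lambda + E_{T_1}^\top\nu + c_{T_1}\right)^\top x_{T_1} - \nu^\top h_{T_1}$, tight at the solved $x_{T_1}$ by strong duality and a valid lower bound elsewhere by the convexity and weak-duality argument above.
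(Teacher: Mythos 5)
Your proof skeleton is the right one, and since this paper does not itself reprove the lemma (it is quoted from Lemma 1 of \cite{Flamm2018}), the parametric-duality argument you outline is exactly what would have to be supplied: $x_{T_1}$ enters \eqref{eq:second_stage_problem} only through the $t=T_1$ data, so the dual feasible set is independent of $x_{T_1}$; any fixed dual-feasible point then gives, via weak duality, an affine lower bound in $x_{T_1}$; and stationarity with respect to $\tilde{x}_{T_1}$ identifies the slope $A_{T_1}^\top\lambda + E_{T_1}^\top\nu + c_{T_1}$. All of that is sound.

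The gap is precisely the intercept bookkeeping you deferred, and it does not come out as you assert. The dual objective is the infimum of the Lagrangian over \emph{all} primal variables; once the stationarity conditions are imposed, the surviving parameter-independent constant is $-\sum_{t=T_1}^{T-1}\nu_t^\top h_t$, i.e.\ \emph{every} second-stage inequality contributes its constant term, not only the one at $t=T_1$. Your complementary-slackness step evaluates the Lagrangian at the primal--dual optimizer of the base problem; that evaluation indeed reproduces $\tilde{G}_{T_1}(x_{T_1})$, but it is not the dual objective as a function of the parameter, which is the object that weak duality uses to bound $\tilde{G}_{T_1}$ at other values of $x_{T_1}$. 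At the optimizer the terms $\nu_t^\top(E_t\tilde{x}_t^\star + F_t\tilde{u}_t^\star - h_t)$ vanish for every $t$, but in the dual function the variable parts $\nu_t^\top(E_t\tilde{x}_t + F_t\tilde{u}_t)$ are absorbed into stationarity while the constants $-\nu_t^\top h_t$ all remain. A scalar example shows the truncated formula can fail to be a lower bound at all: take $T_1=0$, $T=2$, objective $\tilde{x}_2$, dynamics $\tilde{x}_1 = x_0 + u_0$ and $\tilde{x}_2 = \tilde{x}_1 + u_1$, constraints $-u_0 \leq 1$ and $-u_1 \leq 2$. Then $\tilde{G}_0(x_0) = x_0 - 3$, the optimal duals are $\lambda_0=\lambda_1=\nu_0=\nu_1=1$, and the expression in the lemma evaluates to $x_0 - 1 > \tilde{G}_0(x_0)$, whereas the full dual objective $x_0 - \nu_0 h_0 - \nu_1 h_1 = x_0 - 3$ is both valid and tight. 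So a correct proof along your lines establishes the cut with intercept $-\sum_{t=T_1}^{T-1}\nu_t^\top h_t$ (equivalently, intercept $\tilde{G}_{T_1}(x_{T_1}) - a^\top x_{T_1}$ at the solved point, which also restores the tightness used for termination in Algorithm \ref{alg:mpc_ddp_algorithm}); the intercept $-\nu^\top h_{T_1}$ appearing in the statement is recovered only when the omitted $t > T_1$ terms vanish, and your proof cannot close this step as written.
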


Lemma \ref{lemma:ddp_lb} allows one to build a collection of lower-bounding hyperplanes for $\tilde{G}_{T_1}(x_{T_1})$. Let $a = A^\top_{T_1} \lambda + E^\top_{T_1} \nu + c_{T_1}$ and $b = - \nu^\top h_{T_1}$ specify the parameters of the lower bound in Lemma \ref{lemma:ddp_lb}, found for a particular $x_{T_1}$. Each iteration of Algorithm \ref{alg:mpc_ddp_algorithm} solves \eqref{eq:second_stage_problem} for a different $x_{T_1}$, leading to a new hyperplane $(a,b)$. We can combine the hyperplanes into the set $\mathcal{H} = \{(a,b)\}$, and construct the approximate value function 

\begin{equation} \label{eq:hyperplane_underestimate}
\underline{G}_{T_1}(x_{T_1}) = \max_{(a,b) \in \mathcal{H}}(a^\top x_{T_1} + b),
\end{equation}
which is itself a lower bound on $\tilde{G}_{T_1}(x_{T_1})$. Thus, each iteration of Algorithm \ref{alg:mpc_ddp_algorithm} adds a hyperplane constraint to $\underline{G}_{T_1}(x_{T_1})$.

In words, Algorithm \ref{alg:mpc_ddp_algorithm} starts by solving the first-stage NLP with no information about the second stage. This provides a guess $x_{T_1}$ for the initial state of the second-stage LP. In step 2, the LP is solved using this $x_{T_1}$. The resulting objective, computed in step 3, is the exact second-stage value function at $x_{T_1}$. In step 4, if this objective equals the previously-found value function approximation evaluated at $x_{T_1}$, i.e. the current value function approximation is tight at the new $x_{T_1}$, then the problem terminates. Otherwise, in the key step 5 of the algorithm, Lemma \ref{lemma:ddp_lb} is used to generate a new lower-bounding hyperplane for the second-stage value function $\tilde{G}_{T_1}(x_{T_1})$. This hyperplane is then incorporated into the second-stage value function approximation $\underline{G}_{T_1}(x_{T_1})$, which is a lower bound on $\tilde{G}_{T_1}(x_{T_1})$. In step 6, we again solve the first-stage NLP, this time with the updated $\underline{G}_{T_1}(x_{T_1})$. This iterative solution of the first and second stages is repeated until the previously-found value function approximation is tight at the $x_{T_1}$ found when solving the NLP in step 6.

\begin{theorem}
Provided Assumption \ref{assumption:second_stage_feasibility} holds, steps 1 and 6 return feasible solutions of \eqref{eq:first_stage_problem_with_terminal_cost}, and step 2 solves \eqref{eq:second_stage_problem} to optimality, then Algorithm \ref{alg:mpc_ddp_algorithm} terminates in a finite number of iterations, returning a feasible solution of the split-horizon problem. In addition,
\begin{itemize}
\item[(a)] If the nonconvex first stage is solved in step 6 to local optimality, then the returned solution is locally optimal for the two-stage problem \eqref{eq:original_two_stage_problem}.
\item[(b)] If the nonconvex first stage is solved in step 6 to global optimality, then the returned solution is globally optimal for the two-stage problem \eqref{eq:original_two_stage_problem}.
\end{itemize}
\label{thm:local_solution_ddp_convergence}
\end{theorem}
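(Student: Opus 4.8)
The plan is to analyze Algorithm~\ref{alg:mpc_ddp_algorithm} through the value-function reformulation \eqref{eq:first_stage_problem_with_terminal_cost}--\eqref{eq:second_stage_problem}, which is equivalent to \eqref{eq:original_two_stage_problem}. For a first-stage-feasible trajectory ending in $x_{T_1}$, write $J(x_{T_1})$ for the accumulated first-stage cost $\sum_{t=0}^{T_1-1} g_t(x_t,u_t)$, so that the full problem minimizes $F := J + \tilde{G}_{T_1}$ and the master problem solved in steps 1 and 6 minimizes $M := J + \underline{G}_{T_1}$, both over the same first-stage feasible set \eqref{eq:first_stage_feasible_set}. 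Two properties of the cuts drive the argument. First, Lemma~\ref{lemma:ddp_lb} together with the maximum in \eqref{eq:hyperplane_underestimate} gives $\underline{G}_{T_1}(x_{T_1}) \le \tilde{G}_{T_1}(x_{T_1})$ for every feasible $x_{T_1}$ and at every iteration, hence $M \le F$ pointwise. Second, because $x_{T_1}$ enters the second-stage LP \eqref{eq:second_stage_problem} only through its right-hand side, $\tilde{G}_{T_1}$ is convex and piecewise affine in $x_{T_1}$; I would strengthen Lemma~\ref{lemma:ddp_lb} via LP strong duality to note that the cut $(a,b)$ built in step 5 from an optimal basic dual solution is a supporting hyperplane of $\tilde{G}_{T_1}$ at the generating point, i.e.\ it is tight there, $a^\top x_{T_1}^k + b = \tilde{G}_{T_1}(x_{T_1}^k)$, and coincides with one of the finitely many affine pieces of $\tilde{G}_{T_1}$.

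First I would establish finite termination. The test in step 4 compares $LB = \underline{G}_{T_1}(x_{T_1}^k)$, set in step 7 from the iterate produced in step 6, with $UB = \tilde{G}_{T_1}(x_{T_1}^k)$, set in step 3 from the LP solved in step 2 at the same iterate; termination is therefore exactly the condition that the current approximation is tight at $x_{T_1}^k$. If the algorithm does not terminate at iteration $k$, then $\underline{G}_{T_1}(x_{T_1}^k) < \tilde{G}_{T_1}(x_{T_1}^k)$, while the cut just added satisfies $a^\top x_{T_1}^k + b = \tilde{G}_{T_1}(x_{T_1}^k)$. This cut cannot already lie in $\mathcal{H}$ (otherwise $\underline{G}_{T_1}(x_{T_1}^k)$ would already equal $\tilde{G}_{T_1}(x_{T_1}^k)$), so every non-terminating iteration appends a genuinely new cut. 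Since each cut coincides with one of the finitely many affine pieces of $\tilde{G}_{T_1}$, only finitely many distinct cuts exist, and the loop runs finitely often.

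Next I would verify the output. On termination the first-stage trajectory is feasible for \eqref{eq:first_stage_problem_with_terminal_cost} by the hypothesis on steps 1 and 6, the second-stage trajectory is feasible for \eqref{eq:second_stage_problem} because step 2 solves it to optimality (feasibility guaranteed for every first-stage-feasible $x_{T_1}$ by Assumption~\ref{assumption:second_stage_feasibility}), and the coupling \eqref{eq:intermediate_state_coupling} holds since the LP is solved with $\tilde{x}_{T_1} = x_{T_1}$; hence $(x_\text{full},u_\text{full})$ is feasible for \eqref{eq:original_two_stage_problem}. For optimality, let $x_{T_1}^\star$ denote the terminal iterate, so that $\underline{G}_{T_1}(x_{T_1}^\star) = \tilde{G}_{T_1}(x_{T_1}^\star)$ and therefore $F(x_{T_1}^\star) = M(x_{T_1}^\star)$. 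In case (b), if step 6 returns a global minimizer of $M$, then $M(x_{T_1}^\star) = \min M \le \min F$ (from $M \le F$), while $\min F \le F(x_{T_1}^\star) = M(x_{T_1}^\star)$; hence $F(x_{T_1}^\star) = \min F$ and $x_{T_1}^\star$ is globally optimal. In case (a), if step 6 returns only a local minimizer of $M$, then on a neighborhood $N$ one has $F(x_{T_1}) \ge M(x_{T_1}) \ge M(x_{T_1}^\star) = F(x_{T_1}^\star)$ for feasible $x_{T_1} \in N$, so $x_{T_1}^\star$ is a local minimizer of $F$. In either case I would lift this to the full variable space of \eqref{eq:original_two_stage_problem}: since for fixed $x_{T_1}$ the convex second stage attains $\tilde{G}_{T_1}(x_{T_1})$ at the returned $(\tilde{x}^\star,\tilde{u}^\star)$, every full trajectory feasible for \eqref{eq:original_two_stage_problem} has objective at least $F$ evaluated at its own terminal state, and the continuity of the map from full trajectories to $x_{T_1}$ transfers the (local or global) optimality of $x_{T_1}^\star$ to the combined solution.

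The step I expect to be the main obstacle is the finite-termination argument, and specifically pinning down the cut property that makes it work: I must show that the step-5 cut is not merely a valid lower bound (as in Lemma~\ref{lemma:ddp_lb}) but is tight at the generating point and coincides with one of finitely many affine pieces. This requires that step 2 return an optimal \emph{basic} (vertex) dual solution, so that the multipliers $(\lambda,\nu)$ range over the finite vertex set of the dual polyhedron of \eqref{eq:second_stage_problem} rather than over a continuum of optimal multipliers; I would state this explicitly as a standing assumption on the LP solver. A secondary subtlety is the lifting in case (a): care is needed to confirm that local optimality established in the reduced variable $x_{T_1}$ genuinely implies local optimality in the full $(x,u,\tilde{x},\tilde{u})$ space, which rests on the continuity of the projection to $x_{T_1}$ together with optimality of the second-stage response.
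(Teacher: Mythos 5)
Your proposal is correct and follows essentially the same route as the paper: finite termination via strong LP duality (each Benders cut is tight at its generating point) together with the finiteness of dual vertices of \eqref{eq:second_stage_problem}, and then local/global optimality via the sandwich argument $\underline{G}_{T_1} \leq \tilde{G}_{T_1}$ with equality at the terminal iterate. The only differences are cosmetic: you prove case (b) directly rather than deferring to Theorem 1 of \cite{Flamm2018}, you phrase termination as the contrapositive (every non-terminating iteration adds a new cut) of the paper's argument (a repeated cut forces $UB=LB$), and you usefully make explicit the assumption, implicit in the paper's wording ``dual-feasible vertex,'' that the LP solver returns a basic optimal dual solution.
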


\begin{proof}
We first show that Algorithm \ref{alg:mpc_ddp_algorithm} terminates in a finite number of iterations.

Suppose step 6 (or step 1 for the first iteration) has previously returned a particular feasible solution $x_{T_1}$ of \eqref{eq:first_stage_problem_with_terminal_cost}. Due to Assumption \ref{assumption:second_stage_feasibility}, solving \eqref{eq:second_stage_problem} at $x_{T_1}$ returns feasible $(\tilde{u}_{T_1},\ldots,\tilde{u}_{T-1})$ and $(\tilde{x}_{T_1+1}, \ldots, \tilde{x}_T)$. Since the second-stage LP \eqref{eq:second_stage_problem} is solved to optimality,
\begin{equation} \label{eq:second_stage_value_function_def}
\tilde{G}_{T_1}(x_{T_1}) = \sum_{t=T_1}^{T-1} \left( c_t^{\top} \tilde{x}_t + d_t^{\top} \tilde{u}_t \right) + c_T^\top \tilde{x}_T
\end{equation}
is by definition the exact second-stage value function evaluated at the particular $x_{T_1}$.

Step 5 adds the dual-feasible vertex $(\lambda,\nu)$ to $\mathcal{H}$. By the strong duality of \eqref{eq:second_stage_problem}, after $\underline{G}_{T_1}(x_{T_1})$ is updated in step 6,
\begin{equation} \label{eq:exactness_of_value_function}
\underline{G}_{T_1}(x_{T_1}) = \tilde{G}_{T_1}(x_{T_1}).
\end{equation}
at the particular $x_{T_1}$. 

Since \eqref{eq:second_stage_problem} is a linear program, there are a finite number of distinct dual-feasible vertices $(\lambda,\nu)$. If solving the LP \eqref{eq:second_stage_problem} in step 2 yields a dual-feasible vertex that has already been found in a previous iteration of the algorithm, then $\underline{G}_{T_1}(x_{T_1})$ is unchanged. If solving the NLP in step 6 returns the same $x_{T_1}$ as in the previous iteration, \eqref{eq:exactness_of_value_function} already holds. Thus, the value function approximation is tight at $x_{T_1}$, $UB = LB$, and Algorithm $\ref{alg:mpc_ddp_algorithm}$ terminates in step 4. 

Otherwise, we add a hyperplane to $\mathcal{H}$ parameterized by the new dual-feasible vertex. Since the number of such vertices is finite, one either achieves a complete characterization of $\tilde{G}_{T_1}(x_{T_1})$, or else sets $UB = LB$ before this point.

The resulting first- and second-stage arguments are each feasible, and can be concatenated (as in the algorithm) to form feasible solutions $u_\text{full}$ and $x_\text{full}$ of the split-horizon problem.

\textit{Proof of (a)}. We use Algorithm \ref{alg:mpc_ddp_algorithm} to solve \eqref{eq:original_two_stage_problem} to convergence. This returns $(u_0,\ldots,u_{T_1-1},\tilde{u}_{T_1},\ldots,\tilde{u}_{T-1})$, $(x_1,\ldots,x_{T_1},\tilde{x}_{T_1+1},\ldots,\tilde{x}_T)$, and the approximate value function $\underline{G}_{T_1}(x_{T_1})$. 

Each algorithm iteration, we solve an approximation of the first-stage NLP \eqref{eq:first_stage_problem_with_terminal_cost}, replacing $\tilde{G}_{T_1}(x_{T_1})$ with $\underline{G}_{T_1}(x_{T_1})$, and solving to local optimality. This means that when the algorithm has converged, there exist $\delta_x > 0$ and $\delta_u > 0$ such that, for all $\hat{x}_t: \|\hat{x}_t-x_t\| \leq \delta_x $ for $t = 1,\ldots,T_1$, $\hat{u}_t: \|\hat{u}_t-u_t\| \leq \delta_u$ for $t = 0,\ldots,T_1-1$,  
\begin{equation} \label{eq:first_stage_local_optimality}
\sum_{t=0}^{T_1 - 1} g_t(x_t \!,u_t) + \underline{G}_{T_1}(x_{T_1}) \leq \! \sum_{t=0}^{T_1 - 1} g_t(\hat{x}_t,\hat{u}_t) + \underline{G}_{T_1}(\hat{x}_{T_1}). 
\end{equation}

By Lemma 1, since the second stage is an LP, 
\begin{equation} \label{eq:value_function_underestimate}
\underline{G}_{T_1}(x_{T_1}) \leq \tilde{G}_{T_1}(x_{T_1}), \ \forall \ x_{T_1}.
\end{equation}

Considering the case where the algorithm has converged to a particular $x_{T_1}$, and combining \eqref{eq:exactness_of_value_function}, \eqref{eq:first_stage_local_optimality},  and \eqref{eq:value_function_underestimate}, 
\begin{align*}
&\sum_{t=0}^{T_1 - 1} g_t(x_t,u_t) + \tilde{G}_{T_1}(x_{T_1}) 
\\
& \quad \quad \quad \quad = \sum_{t=0}^{T_1 - 1} g_t(x_t,u_t) + \underline{G}_{T_1}(x_{T_1})
\\
& \quad \quad \quad \quad \leq \sum_{t=0}^{T_1 - 1} g_t(\hat{x}_t,\hat{u}_t) + \underline{G}_{T_1}(\hat{x}_{T_1})
\\
& \quad \quad \quad \quad \leq \sum_{t=0}^{T_1 - 1} g_t(\hat{x}_t,\hat{u}_t) + \tilde{G}_{T_1}(\hat{x}_{T_1}).
\end{align*}

This result holds for all $\hat{x}_t: \|\hat{x}_t-x_t\| \leq \delta_x $ for $t = 1,\ldots,T_1$, $\hat{u}_t: \|\hat{u}_t-u_t\| \leq \delta_u$ for $t = 0,\ldots,T_1-1$. Thus, $(x_1,\ldots,x_{T_1})$ and $(u_0,\ldots,u_{T_1-1})$ are locally optimal solutions to the exact \eqref{eq:first_stage_problem_with_terminal_cost}.

To complete the proof of (a), note that the linear second-stage problem \eqref{eq:second_stage_problem} is solved to optimality for the given first-stage arguments.

\textit{Proof of (b)} follows from Theorem 1 of \cite{Flamm2018}.
\end{proof}

We point out that the result holds regardless of whether the first stage is solved with a deterministic solver. Note that when solving the first stage NLP \eqref{eq:first_stage_problem_with_terminal_cost} with a given $\underline{G}_{T_1}(x_{T_1})$, a nondeterministic solver might not return the same feasible solution in subsequent iterations. Nevertheless, the solver will return an $x_{T_1}$, where either the existing hyperplanes provide a tight bound to the value function (in which case the $UB$ and $LB$ agree in Algorithm \ref{alg:mpc_ddp_algorithm}), or a new hyperplane will be added to $\underline{G}_{T_1}(x_{T_1})$. As there are a limited number of hyperplanes to add, the algorithm will converge in a finite number of iterations.

\section{Error bound on two-stage approximation}
The results in the previous section establish the properties of solutions for the two-stage problem \eqref{eq:original_two_stage_problem} returned by Algorithm \ref{alg:mpc_ddp_algorithm}. Since the original intention was to approximate the nonlinear program \eqref{eq:original_minlp}, one would also like to know how far the optimal solutions of the two-stage approximate problem \eqref{eq:original_two_stage_problem} are from those of the true nonlinear problem \eqref{eq:original_minlp}. Related results are provided in \cite{Guigues2018}, which gives approximation bounds for a setting where each stage of a multi-stage linear program is solved to a certain tolerance. Here, we extend this approach to address the case of \eqref{eq:original_two_stage_problem}, where instead of solving a linear program to a known tolerance, we approximate the second stage of the nonlinear problem \eqref{eq:original_minlp} with a linear program, and solve that linear program exactly.

For a given timestep $t$, let $\tilde{\mathcal{Z}}_t$ denote the set of arguments of the second-stage LP \eqref{eq:second_stage_problem} that satisfy \eqref{eq:second_stage_inequality}. Recall that $\mathcal{Z}_t$ denotes the set of arguments satisfying \eqref{eq:original_minlp_set_constraint} at time $t$. 

\begin{assume} \label{assumption:lp_vs_nlp_constraint_approximation}
For all $t = T_1,\ldots,T-1$, there exists a map $M_t: \tilde{\mathcal{Z}}_t \rightarrow \mathcal{Z}_t$ such that 
\begin{itemize}
\item[(a)] There exists a $\delta_t \geq 0$, such that for all $(\tilde{x}_t,\tilde{u}_t) \in \tilde{\mathcal{Z}}_t$,
\begin{equation*}
c_{t}^{\top} \tilde{x}_t + d_t^{\top} \tilde{u}_t \leq g_t(M_t(\tilde{x}_t,\tilde{u}_t)) \leq c_{t}^{\top} \tilde{x}_t + d_t^{\top} \tilde{u}_t + \delta_t.
\end{equation*}
\item[(b)] $M_t$ is surjective, i.e. for all $(x_t,u_t) \in \mathcal{Z}_t$, there exists $(\tilde{u}_t, \tilde{x}_t) \in \tilde{\mathcal{Z}}_t$ such that $M_t(\tilde{x}_t,\tilde{u}_t) = (x_t,u_t)$.
\end{itemize} 
\end{assume}
In other words, at each timestep, there is a transformation $M_t$ from feasible arguments of the LP to feasible arguments of the NLP, such that the LP objective is an underestimate of the NLP objective, with a maximum underestimate of $\delta_t$. Additionally, all feasible solutions of the NLP can be found from a feasible solution of the LP through the transformation.

Note that by \eqref{eq:intermediate_state_coupling}, at $t=T_1$, $M_{T_1}(\tilde{x}_{T_1}, \tilde{u}_{T_1})$ leaves $\tilde{x}_{T_1}$ unchanged, and maps the input $\tilde{u}_{T_1}$ to a feasible input of the NLP.


We first consolidate the terminology for the exact NLP \eqref{eq:original_minlp} and two-stage approximation \eqref{eq:original_two_stage_problem}:
\begin{itemize} 
\item $G_0(x_0)$ is the optimal value of the exact NLP \eqref{eq:original_minlp}, as a function of the initial state $x_0$.
\item $G_{T_1}(x_{T_1})$ is the value of the second part of the exact NLP \eqref{eq:original_minlp} ($t = T_1,\ldots, T-1$), as a function of the intermediate state $x_{T_1}$. Note that 
\begin{align} \label{eq:original_minlp_two_stage}
G_{0}(x_{0}) = & \min_{\substack{u_0,\ldots,u_{T_1-1} \\ x_1,\ldots,x_{T_1}}} \
  \sum_{t=0}^{T_1 - 1} g_t(x_t,u_t) + G_{T_1}(x_{T_1})
\\ \nonumber
\text{s.t.} \quad &  x_{t+1} = f_t(x_t,u_t), \ \ t = 0, \ldots, T_1-1
\\ \nonumber
&  (x_t,u_t) \in \mathcal{Z}_t, \quad \quad  \ \, t = 0, \ldots, T_1-1.
\end{align}
\item $\tilde{G}_{T_1}(x_{T_1})$ is the value of the second-stage LP \eqref{eq:second_stage_problem}, as a function of the intermediate state $x_{T_1}$.
\item $\underline{G}_{T_1}(x_{T_1})$ is the approximate value function for the second-stage LP \eqref{eq:second_stage_problem} in Algorithm \ref{alg:mpc_ddp_algorithm}. It consists of a finite number of hyperplanes bounding $\tilde{G}_{T_1}(x_{T_1})$ from below. Hyperplanes are iteratively added to this set via DDP.
\end{itemize}
Now, we consider the objective for the split-horizon problem \eqref{eq:original_two_stage_problem} after Algorithm \ref{alg:mpc_ddp_algorithm} has converged, and define it as
\begin{align} \label{eq:second_stage_approx_problem}
\underline{G}_{0}(x_{0}) = & \min_{\substack{u_0,\ldots,u_{T_1-1} \\ x_1,\ldots,x_{T_1}}} \  \sum_{t=0}^{T_1 - 1} g_t(x_t,u_t) + \underline{G}_{T_1}(x_{T_1})
\\ \nonumber
\text{s.t.} \quad &  x_{t+1} = f_t(x_t,u_t), \ \ t = 0, \ldots, T_1-1
\\ \nonumber
&  (x_t,u_t) \in \mathcal{Z}_t, \quad \quad  \ \, t = 0, \ldots, T_1-1.
\end{align}

Our aim is to bound the difference between $G_{0}(x_{0})$ and $\underline{G}_{0}(x_{0})$. 

\begin{theorem} \label{thm:two_stage_error_bound}
If Assumptions \ref{assumption:second_stage_feasibility} and \ref{assumption:lp_vs_nlp_constraint_approximation} hold, and step 6 is solved to $\epsilon$-optimality in each iteration, then
\begin{equation*}
\underline{G}_{0}(x_{0}) \leq G_{0}(x_{0}) \leq \underline{G}_{0}(x_{0}) + \epsilon + \sum_{t=T_1}^{T-1} \delta_t.
\end{equation*} 
\end{theorem}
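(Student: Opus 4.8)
The plan is to sandwich the exact second-stage value function $G_{T_1}$ between the linear value function $\tilde{G}_{T_1}$ and $\tilde{G}_{T_1}+\sum_{t=T_1}^{T-1}\delta_t$, and then propagate these pointwise bounds through the first-stage minimization, which is common to both problems. Concretely, I would first establish, for every feasible $x_{T_1}$, the chain
\[
\underline{G}_{T_1}(x_{T_1}) \;\le\; \tilde{G}_{T_1}(x_{T_1}) \;\le\; G_{T_1}(x_{T_1}) \;\le\; \tilde{G}_{T_1}(x_{T_1}) + \sum_{t=T_1}^{T-1}\delta_t .
\]
The leftmost inequality is exactly \eqref{eq:value_function_underestimate} from Lemma \ref{lemma:ddp_lb}. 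For the middle inequality I would take an optimizer of the exact second-stage NLP at $x_{T_1}$ and, using the surjectivity in Assumption \ref{assumption:lp_vs_nlp_constraint_approximation}(b), pull it back to an LP-feasible trajectory whose stage costs are no larger (by the left inequality in \ref{assumption:lp_vs_nlp_constraint_approximation}(a)); since $\tilde{G}_{T_1}$ minimizes over such trajectories, $\tilde{G}_{T_1}(x_{T_1})\le G_{T_1}(x_{T_1})$. For the rightmost inequality I would take the LP optimizer and push it forward through $M_t$ into $\mathcal{Z}_t$; by the right inequality in \ref{assumption:lp_vs_nlp_constraint_approximation}(a) each stage cost grows by at most $\delta_t$, so the resulting NLP-feasible trajectory has cost at most $\tilde{G}_{T_1}(x_{T_1})+\sum_{t=T_1}^{T-1}\delta_t$, which bounds $G_{T_1}(x_{T_1})$.

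For the lower bound $\underline{G}_0(x_0)\le G_0(x_0)$, I would combine the pointwise inequality $\underline{G}_{T_1}\le G_{T_1}$ with the observation that the representations \eqref{eq:original_minlp_two_stage} and \eqref{eq:second_stage_approx_problem} are minimizations over the identical first-stage feasible set with identical stage costs $\sum_{t=0}^{T_1-1}g_t$, differing only in the terminal term. Monotonicity of the minimum in the pointwise-smaller terminal cost then yields the claim directly; note that $\epsilon$ plays no role here.

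For the upper bound, I would exhibit an explicit feasible point of the exact NLP \eqref{eq:original_minlp} and bound its cost. I take the trajectory returned by Algorithm \ref{alg:mpc_ddp_algorithm} at convergence: its first stage $(x_t^*,u_t^*)$ is already feasible for \eqref{eq:first_stage_problem_with_terminal_cost} (hence for the first stage of \eqref{eq:original_minlp}), and I map the second-stage LP solution through $M_t$ to obtain a feasible NLP continuation, where the note after Assumption \ref{assumption:lp_vs_nlp_constraint_approximation} guarantees the state at $T_1$ is preserved so the pieces splice at \eqref{eq:intermediate_state_coupling}. Its cost is at most the first-stage cost plus $\tilde{G}_{T_1}(x_{T_1}^*)+\sum_{t=T_1}^{T-1}\delta_t$ by \ref{assumption:lp_vs_nlp_constraint_approximation}(a). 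At convergence $UB=LB$ forces $\tilde{G}_{T_1}(x_{T_1}^*)=\underline{G}_{T_1}(x_{T_1}^*)$, and $\epsilon$-optimality of step 6 gives $\sum_{t=0}^{T_1-1}g_t(x_t^*,u_t^*)+\underline{G}_{T_1}(x_{T_1}^*)\le\underline{G}_0(x_0)+\epsilon$. Chaining these, the constructed feasible cost is at most $\underline{G}_0(x_0)+\epsilon+\sum_{t=T_1}^{T-1}\delta_t$, and since $G_0(x_0)$ is the minimum over feasible NLP solutions, the upper bound follows.

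The main obstacle is the dynamic consistency of the trajectories produced by $M_t$: Assumption \ref{assumption:lp_vs_nlp_constraint_approximation} is phrased pointwise on the sets $\tilde{\mathcal{Z}}_t$ and $\mathcal{Z}_t$, whereas both the pull-back (surjectivity) and push-forward arguments require that applying $M_t$ stagewise to a \emph{dynamically} feasible trajectory again produces one satisfying $\tilde{x}_{t+1}=A_t\tilde{x}_t+B_t\tilde{u}_t$ or $x_{t+1}=f_t(x_t,u_t)$ respectively, with the coupling $\tilde{x}_{T_1}=x_{T_1}$ intact. I would therefore need to argue that $M_t$ respects the dynamics, which is the substantive content behind treating $M_t$ as a map between full state-input trajectories rather than between single points, since the set-membership bounds alone do not certify feasibility of the recursion. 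A secondary detail is the terminal stage: Assumption \ref{assumption:lp_vs_nlp_constraint_approximation} covers $t=T_1,\dots,T-1$ while the bound omits $t=T$, so I would handle the terminal cost $c_T^\top\tilde{x}_T$ versus $g_T(x_T)$ separately, assuming they agree when the terminal states coincide so that no extra $\delta$ term is incurred.
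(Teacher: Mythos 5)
Your proof follows essentially the same route as the paper's: the pointwise sandwich $\underline{G}_{T_1}(x_{T_1}) \leq \tilde{G}_{T_1}(x_{T_1}) \leq G_{T_1}(x_{T_1}) \leq \tilde{G}_{T_1}(x_{T_1}) + \sum_{t=T_1}^{T-1}\delta_t$ obtained from Lemma \ref{lemma:ddp_lb} and the two directions of Assumption \ref{assumption:lp_vs_nlp_constraint_approximation}, monotonicity of the common first-stage minimization for the lower bound, and a chain combining $\epsilon$-optimality of step 6 with the convergence identity $\tilde{G}_{T_1}(x_{T_1}) = \underline{G}_{T_1}(x_{T_1})$ for the upper bound, merely reorganized as an explicit feasible-point construction rather than the paper's bounding of the difference $G_0(x_0) - \underline{G}_0(x_0)$. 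The two caveats you flag --- that $M_t$ must respect the dynamics (i.e., act on dynamically feasible trajectories rather than only pointwise on $\tilde{\mathcal{Z}}_t$), and that the terminal costs $g_T(x_T)$ and $c_T^\top \tilde{x}_T$ must agree since Assumption \ref{assumption:lp_vs_nlp_constraint_approximation} stops at $t=T-1$ --- are implicitly assumed but never addressed in the paper's own proof, so your treatment is, if anything, the more careful one.
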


\begin{proof}
By the definitions of \eqref{eq:original_minlp} and \eqref{eq:original_two_stage_problem}, the NLP and split-horizon problems are equivalent for the first part of the horizon, and differ only in the second stage. 

By Lemma 1, the Benders decomposition underestimates the objective of the second-stage LP, and $\underline{G}_{T_1}(x_{T_1}) \leq \tilde{G}_{T_1}(x_{T_1})$. By Assumption \ref{assumption:lp_vs_nlp_constraint_approximation}, for any solution to the true NLP, a corresponding solution to the LP approximation exists with a lower objective. Combining these two observations, for any $x_{T_1}$, 
\begin{equation} \label{eq:second_stage_underapproximation}
\underline{G}_{T_1}(x_{T_1}) \leq \tilde{G}_{T_1}(x_{T_1}) \leq G_{T_1}(x_{T_1}).
\end{equation}

If we solve \eqref{eq:second_stage_approx_problem} to $\epsilon$-optimality in step 6, returning $(u_0,\ldots,u_{T_1-1})$ and $(x_1,\ldots,x_{T_1})$, then
\begin{equation} \label{eq:value_function_label_4}
 \underline{G}_{0}(x_{0}) \leq \sum_{t=0}^{T_1 - 1} g_t(x_t,u_t) + \underline{G}_{T_1}(x_{T_1}) \leq \underline{G}_{0}(x_{0}) + \epsilon.
\end{equation}

By \eqref{eq:second_stage_underapproximation}, since $(u_0,\ldots,u_{T_1-1})$ and $(x_1,\ldots,x_{T_1})$ meet the constraints of \eqref{eq:original_minlp_two_stage} (and \eqref{eq:second_stage_approx_problem}), the objective of \eqref{eq:second_stage_approx_problem} is less than or equal to the objective of \eqref{eq:original_minlp_two_stage}, and thus
\begin{align} \label{eq:value_function_label_1}
0 & \leq G_{0}(x_{0}) - \underline{G}_{0}(x_{0}) 
\\ \label{eq:value_function_label_5}
& \leq G_{0}(x_{0}) - \sum_{t=0}^{T_1 - 1} g_t(x_t,u_t) - \underline{G}_{T_1}(x_{T_1}) + \epsilon
\\ \label{eq:value_function_label_2}
&\leq G_{T_1}(x_{T_1}) - \underline{G}_{T_1}(x_{T_1}) + \epsilon.
\end{align}

The second inequality \eqref{eq:value_function_label_5} comes from substituting in the right inequality of \eqref{eq:value_function_label_4} into \eqref{eq:value_function_label_1}. The final inequality \eqref{eq:value_function_label_2} is because $G_0(x_0) \leq \sum_{t=0}^{T_1 - 1} g_t(x_t,u_t) + G_{T_1}(x_{T_1})$ for any feasible solution, in particular the found $\epsilon$-suboptimal solution.

Assumption \ref{assumption:lp_vs_nlp_constraint_approximation} states that at each second-stage timestep, all feasible arguments of $G_{T_1}(\cdot)$ are the result of the mapping $M_t$ from feasible arguments of $\tilde{G}_{T_1}(\cdot)$, such that the objective of $\tilde{G}_{T_1}(\cdot)$ is an underestimate of the objective of $G_{T_1}(\cdot)$ by at most $\delta_t$. Taking the worst case for each timestep from $T_1$ to $T-1$, for any $x_{T_1}$,
\begin{equation}
G_{T_1}(x_{T_1}) - \tilde{G}_{T_1}(x_{T_1}) \leq \sum_{t=T_1}^{T-1} \delta_t.
\end{equation}

When Algorithm \ref{alg:mpc_ddp_algorithm} has converged, by \eqref{eq:exactness_of_value_function}, $\tilde{G}_{T_1}(x_{T_1}) = \underline{G}_{T_1}(x_{T_1})$, and thus 
\begin{equation} \label{eq:value_function_label_3}
G_{T_1}(x_{T_1}) - \underline{G}_{T_1}(x_{T_1}) \leq \sum_{t=T_1}^{T-1} \delta_t.
\end{equation}
Combining \eqref{eq:value_function_label_1}, \eqref{eq:value_function_label_2}, and \eqref{eq:value_function_label_3},
\begin{equation}
\underline{G}_{0}(x_{0}) \leq G_{0}(x_{0}) \leq \underline{G}_{0}(x_{0}) + \epsilon + \sum_{t=T_1}^{T-1} \delta_t.
\end{equation}
\end{proof}

We now consider the difference between the optimum of the exact problem, and the objective of the exact problem evaluated using the arguments found in the approximate problem.

\begin{theorem} \label{theorem:two_stage_argument_optimality_bound}
If Assumptions \ref{assumption:second_stage_feasibility} and \ref{assumption:lp_vs_nlp_constraint_approximation} hold, step 6 is solved to $\epsilon$-optimality, and step 6 returns $(u_0,\ldots,u_{T_1-1})$ and $(x_1,\ldots,x_{T_1})$ when Algorithm \ref{alg:mpc_ddp_algorithm} converges, then 
\begin{equation*}
G_0(x_0) \leq \! \! \sum_{t=0}^{T_1 - 1} \! \!g_t(x_t, u_t) + G_{T_1}(x_{T_1}) \leq G_0(x_0) + \epsilon + \! \sum_{t=T_1}^{T-1} \! \delta_t.
\end{equation*} 
\end{theorem}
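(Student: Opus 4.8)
The plan is to prove the two-sided bound separately, reusing the intermediate inequalities already established in the proof of Theorem~\ref{thm:two_stage_error_bound}. Throughout, let $(u_0,\ldots,u_{T_1-1})$ and $(x_1,\ldots,x_{T_1})$ denote the first-stage arguments returned by step~6 at convergence. By hypothesis these are feasible for \eqref{eq:first_stage_problem_with_terminal_cost}, so they satisfy the dynamics $x_{t+1}=f_t(x_t,u_t)$ and the set constraints $(x_t,u_t)\in\mathcal{Z}_t$ appearing in the first stage of \eqref{eq:original_minlp_two_stage}.

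For the left inequality I would argue purely by feasibility. Since $(x_1,\ldots,x_{T_1})$ and $(u_0,\ldots,u_{T_1-1})$ satisfy the first-stage constraints, and $G_{T_1}(x_{T_1})$ is by definition the optimal value of the exact second part of \eqref{eq:original_minlp} started from $x_{T_1}$, the quantity $\sum_{t=0}^{T_1-1} g_t(x_t,u_t) + G_{T_1}(x_{T_1})$ is precisely the objective of \eqref{eq:original_minlp_two_stage} evaluated at one particular feasible first-stage trajectory. Because $G_0(x_0)$ is the minimum of that objective over all feasible first-stage trajectories, the bound $G_0(x_0) \leq \sum_{t=0}^{T_1-1} g_t(x_t,u_t) + G_{T_1}(x_{T_1})$ follows immediately.

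For the right inequality, the plan is to split $G_{T_1}(x_{T_1})$ into the approximate value function plus the approximation gap, and bound each piece with results already at hand. Writing
\begin{equation*}
\sum_{t=0}^{T_1-1} g_t(x_t,u_t) + G_{T_1}(x_{T_1}) = \left( \sum_{t=0}^{T_1-1} g_t(x_t,u_t) + \underline{G}_{T_1}(x_{T_1}) \right) + \left( G_{T_1}(x_{T_1}) - \underline{G}_{T_1}(x_{T_1}) \right),
\end{equation*}
I would bound the first bracket using the right inequality of \eqref{eq:value_function_label_4} together with the left inequality $\underline{G}_0(x_0) \leq G_0(x_0)$ of Theorem~\ref{thm:two_stage_error_bound}, which chain to give $\sum_{t=0}^{T_1-1} g_t(x_t,u_t) + \underline{G}_{T_1}(x_{T_1}) \leq G_0(x_0) + \epsilon$. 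The second bracket is exactly the quantity controlled by \eqref{eq:value_function_label_3}, namely $G_{T_1}(x_{T_1}) - \underline{G}_{T_1}(x_{T_1}) \leq \sum_{t=T_1}^{T-1} \delta_t$; this is applicable here because the algorithm has converged, so that $\underline{G}_{T_1}(x_{T_1}) = \tilde{G}_{T_1}(x_{T_1})$ by \eqref{eq:exactness_of_value_function} and the per-timestep underestimate of Assumption~\ref{assumption:lp_vs_nlp_constraint_approximation} applies. Adding the two bounds yields the claimed right-hand inequality.

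I do not anticipate a genuine obstacle, as every ingredient is already in place; the only delicate point is bookkeeping. One must verify that the arguments fed into $G_{T_1}(\cdot)$ are precisely those returned by step~6, so that \eqref{eq:value_function_label_4} applies to them, and that the convergence hypothesis is invoked to justify the exactness \eqref{eq:exactness_of_value_function} underlying \eqref{eq:value_function_label_3}. It is worth stating explicitly that these first-stage arguments are only $\epsilon$-optimal for the approximate problem \eqref{eq:second_stage_approx_problem}, and not optimal for the exact problem; the theorem precisely quantifies the loss incurred when they are scored against the exact second-stage value function $G_{T_1}$ rather than the approximate $\underline{G}_{T_1}$.
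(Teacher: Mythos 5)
Your proposal is correct and follows essentially the same route as the paper's proof: both handle the left inequality by pure feasibility, and both obtain the right inequality by splitting $G_{T_1}(x_{T_1})$ into $\underline{G}_{T_1}(x_{T_1})$ plus a gap bounded by $\sum_{t=T_1}^{T-1}\delta_t$ via \eqref{eq:value_function_label_3} (justified at convergence through \eqref{eq:exactness_of_value_function}), combined with $\epsilon$-optimality of the returned first-stage arguments. The only cosmetic difference is that the paper instantiates the $\epsilon$-optimality inequality \eqref{eq:lemma_3_epsilon_suboptimality} at the exact optimizer $(x^*,u^*)$ and then uses $\underline{G}_{T_1}(x^*_{T_1}) \leq G_{T_1}(x^*_{T_1})$, whereas you route through \eqref{eq:value_function_label_4} and the inequality $\underline{G}_{0}(x_{0}) \leq G_{0}(x_{0})$ from Theorem~\ref{thm:two_stage_error_bound}; these are the same argument factored differently.
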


\begin{proof}
The first inequality is straightforward: $G_0(x_0)$ is the optimal objective of the NLP \eqref{eq:original_minlp}. Evaluating the objective of the NLP at any other feasible solution cannot lead to a lower objective.

Now, for the sake of notational ease, denote the resulting arguments when step 6 is solved to $\epsilon$-optimality as $u = (u_0,\ldots,u_{T_1-1})$ and $x = (x_1,\ldots,x_{T_1})$. Furthermore, denote the first-stage cost as $C_0(x,u) = \sum_{t=0}^{T_1 - 1} g_t(x_t,u_t)$. 

Due to the $\epsilon$-optimality of $(x,u)$, for all feasible $(\hat{x},\hat{u})$,
\begin{equation} \label{eq:lemma_3_epsilon_suboptimality}
C_0(x,u) + \underline{G}_{T_1}(x_{T_1}) \leq C_0(\hat{x},\hat{u}) + \underline{G}_{T_1}(\hat{x}_{T_1}) + \epsilon.
\end{equation}

To prove the second inequality in the theorem, consider the difference between the NLP objective evaluated using $(x,u)$, and the optimal NLP objective $G_0(x_0)$, with corresponding arguments $(x^*,u^*)$:
{\allowdisplaybreaks
\begin{align} \nonumber
&C_0(x,u) + G_{T_1}(x_{T_1}) - G_0(x_0) 
\\ \label{eq:lemma_3_first_inequality}
&\leq C_0(x,u) + \underline{G}_{T_1}(x_{T_1}) + \sum_{t=T_1}^{T-1} \delta_t - G_0(x_0) 
\\ \label{eq:lemma_3_second_inequality}
&\leq C_0(x^{*},u^{*}) + \underline{G}_{T_1}(x_{T_1}^{*}) + \epsilon + \sum_{t=T_1}^{T-1} \delta_t
- G_0(x_0)
\\ \nonumber
&= C_0(x^{*},u^{*}) + \underline{G}_{T_1}(x_{T_1}^{*}) + \epsilon + \sum_{t=T_1}^{T-1} \delta_t
\\ \label{eq:lemma_3_third_inequality}
& \quad - C_0(x^{*},u^{*}) - G_{T_1}(x_{T_1}^{*}) 
\\ \nonumber
&= \epsilon + \sum_{t=T_1}^{T-1} \delta_t + \underline{G}_{T_1}(x_{T_1}^{*}) - G_{T_1}(x_{T_1}^{*})  
\\ \label{eq:lemma_3_fourth_equality}  
&\leq \epsilon + \sum_{t=T_1}^{T-1} \delta_t.
\end{align}
}

The inequality \eqref{eq:lemma_3_first_inequality} comes from substituting \eqref{eq:value_function_label_3} for $G_{T_1}(x_{T_1})$. The inequality \eqref{eq:lemma_3_second_inequality} is due to choosing $(x^*,u^*)$ in \eqref{eq:lemma_3_epsilon_suboptimality}. The equality \eqref{eq:lemma_3_third_inequality} comes from expanding $G_0(x_0)$ in terms of its optimal arguments. Finally, \eqref{eq:lemma_3_fourth_equality} holds because $\underline{G}_{T_1}(x_{T_1}^{*}) \leq G_{T_1}(x_{T_1}^{*})$, as in \eqref{eq:second_stage_underapproximation}.
\end{proof}

By solving the first stage to a higher level of optimality and using a tighter linearization, the difference between the exact and approximate problem can be reduced. Nevertheless, the numerical results presented below suggest that very good performance can be obtained for a relatively coarse approximation (see Section \ref{section:multicell_comparison} for a comparison with a state-of-the-art method).

\section{Hydro Reservoir System Model and Approximation}
We apply the above split-horizon approximation method to a system of $N$ interconnected reservoirs, with a specific topology of pumps and turbines that allow the reservoirs to exchange water. Denote the set of reservoirs to which reservoir $i$ can charge or discharge  as $\mathcal{N}^{i \to}$ and the set of reservoirs which can charge or discharge to reservoir $i$ as $\mathcal{N}^{\to i}$. We can transfer water volume $V^{i \rightarrow j}$ from each reservoir $i$ to any $j \in  \mathcal{N}^{i \to}$. The power associated with this action, $P^{i \rightarrow j}$, is positive when pumping water to a higher elevation reservoir or negative when releasing water through a turbine to a lower elevation reservoir. We buy and sell power on the electricity spot market at price $p$. This convention on device power and electricity price means that a negative objective value corresponds to a profit, while a positive objective value represents a loss. The aim is to maximize profit by taking advantage of spot price fluctuations. We assume the volume stored in reservoir $i$ is linearly proportional to the reservoir level $\ell^i$, with a proportionality constant $\gamma^i$ reflecting the surface area of the reservoir. That is, if $V^{i \rightarrow j}$ is transferred from reservoir $i$ to $j$, $\ell^i$ will decrease by $V^{i \rightarrow j}/ \gamma^i $ (and $\ell^j$ will increase by $V^{i \rightarrow j}/ \gamma^j $). For simplicity, we assume that there are no inflows or evaporative losses in this system.

Intuitively, the problem can be cast in the finite horizon optimal control framework considered here by setting $x = (\ell_0,\ldots,\ell_T)$, with $\ell_t = (\ell_t^1,\ldots,\ell_t^N)$ for all $t$ and $u = (V_0,\ldots,V_{T-1})$, with $V_t = (\{V^{i \to j}_t\}_{j \in \mathcal{N}^{i \to}})_{i = 1,\ldots,N}$. We wish to consider a horizon that is sufficiently long to capture seasonal price fluctuations.

\subsection{Nonlinear Exact Model}
We assume that the energy associated with transferring a unit volume of water from reservoir $i$ to $j$ depends on the net head between reservoirs $i$ and $j$ \cite{Cerisola2012}:
\begin{equation*}
E^{i \rightarrow j}_t = \alpha^{i \rightarrow j} + \beta^{i \rightarrow j} \left( \ell^i_t \!-\! \ell^j_t \right).
\end{equation*}
Letting $P^{i \rightarrow j}_t = V^{i \rightarrow j}_t E^{i \rightarrow j}_t$, we write the optimal control problem as
\begin{subequations} \label{eq:bilinear_problem}
\begin{align} \label{eq:bilinear_objective}
\min_{P,\ell,V} \ & \sum_{t=0}^{T-1} \sum_{i=1}^N  \sum_{ j \in \mathcal{N}^{i \to}} p_t P^{i \rightarrow j}_t + c_T^{\top} \ell_T
\\ \nonumber
\text{s.t.} \quad & P^{i \rightarrow j}_t = V^{i \rightarrow j}_t \left( \alpha^{i \to j} + \beta^{i \to j} \left( \ell^i_t-\ell^j_t \right) \right) 
\\ \label{eq:bilinear_constraint_power}
& \quad \forall \ j \in \mathcal{N}^{i \to}
\\ \label{eq:level_dynamics}
& \ell^i_{t+1} = \ell^i_t + \frac{1}{\gamma_i} \Big( \sum_{k \in \mathcal{N}^{\to i}} \! V^{k \rightarrow i}_t - \! \sum_{j \in \mathcal{N}^{i \to}} \! V^{i \rightarrow j}_t \Big)
\\ \label{eq:level_bounds}
& \underline{\ell}^i \leq \ell^i_t \leq \bar{\ell}^i, \quad \ell^i_0 \text{ given}
\\ \label{eq:vol_bounds}
& \underline{V}^{i \to j} \leq V^{i \rightarrow j}_t \leq \bar{V}^{i \to j} \, \quad \forall \ j \in \mathcal{N}^{i \to}
\\ \nonumber
& \quad i = 1, \ldots, N;  \quad t = 0, \ldots, T-1.
\end{align}
\end{subequations}
By absorbing equation \eqref{eq:bilinear_constraint_power} into the objective function, we can move the bilinear terms from the constraints into the objective, and eliminate the variables $P_t^{i \to j}$. The objective \eqref{eq:bilinear_objective} then becomes
\begin{align} \label{eq:bilinear_objective_split}
\sum_{t=0}^{T-1} \sum_{i=1}^N  \sum_{ j \in \mathcal{N}^{i \to}} p_t V^{i \rightarrow j}_t \alpha^{i \to j} + c_T^{\top} \ell_T + \textit{bilinear term}
\end{align}
with the bilinear term equal to
\begin{align} \nonumber
&\sum_{t=0}^{T-1} \sum_{i=1}^N  \sum_{ j \in \mathcal{N}^{i \to}} p_t V^{i \rightarrow j}_t  \beta^{i \to j} \left( \ell^i_t-\ell^j_t \right)
\\ \label{eq:bilinear_objective_bilinear_part}
= &\sum_{t=0}^{T-1} \sum_{i=1}^N  \sum_{ j \in \mathcal{N}^{i \to}} p_t \beta^{i \to j} \left( V^{i \rightarrow j}_t \ell^i_t- V^{i \rightarrow j}_t \ell^j_t \right).
\end{align}
The reformulated problem thus has linear constraints (since \eqref{eq:bilinear_constraint_power} has been removed), and an objective with linear and bilinear terms. For sufficiently small horizons, we can find the global optimum in a reasonable length of time using \textsc{YALMIP}'s built-in \textit{BMIBNB} spatial branch-and-bound solver \cite{Lofberg2004}. We have observed that Matlab's \textit{fmincon} also tends to return the global optimum for small problem instances in the simulations presented in Section \ref{section:results}.

\subsection{Linear Approximate Model} \label{section:linearization_subsection}
A common way of solving problem \eqref{eq:bilinear_problem} is to solve a convex outer approximation. One way to do this is to replace the constraint \eqref{eq:bilinear_constraint_power} with
\begin{align} \label{eq:crossterm_linearization}
P^{i \rightarrow j}_t =  V^{i \rightarrow j}_t \; \alpha^{i \to j} + \beta^{i \to j} (\chi^{(i \to j,i)}_t-\chi^{(i \to j,j)}_t).
\end{align}
Here, we have introduced the additional variables $\chi^{(i \rightarrow j,i)}_t$ and $\chi^{(i \rightarrow j,j)}_t$  to represent the bilinear terms $V^{i \rightarrow j}_t \ell^i_t$ and $V^{i \rightarrow j}_t \ell^j_t$ respectively. Incorporating this linear constraint into the objective, the bilinear term \eqref{eq:bilinear_objective_bilinear_part}, is thus approximated by 
\begin{align} \label{eq:bilinear_objective_linearized_part}
\sum_{t=0}^{T-1} \sum_{i=1}^N  \sum_{ j \in \mathcal{N}^{i \to}} p_t \beta^{i \to j} \left( \chi^{(i \rightarrow j,i)}_t - \chi^{(i \rightarrow j,j)}_t \right).
\end{align}
We then linearize using McCormick envelopes \cite{Cerisola2012}, adding to the problem for each $\chi^{(i \rightarrow j,i)}_t$ the inequalities 
\begin{subequations} \label{eq:mccormick_envelope}
\begin{align} \label{eq:mccormick_1}
\chi^{(i \rightarrow j,i)}_t &\geq V^{i \rightarrow j}_t \bar{\ell}^i_t + \bar{V}^{i \rightarrow j}_t  \ell^i_t  - \bar{V}^{i \rightarrow j}_t  \bar{\ell}^i_t, 
\\ \label{eq:mccormick_2}
\chi^{(i \rightarrow j,i)}_t &\geq V^{i \rightarrow j}_t \underline{\ell}^i_t  + \underline{V}^{i \rightarrow j}_t  \ell^i_t  - \underline{V}^{i \rightarrow j}_t  \underline{\ell}^i_t, 
\\ \label{eq:mccormick_3}
\chi^{(i \rightarrow j,i)}_t &\leq V^{i \rightarrow j}_t \bar{\ell}^i_t + \underline{V}^{i \rightarrow j}_t  \ell^i_t - \underline{V}^{i \rightarrow j}_t  \bar{\ell}^i_t, 
\\ \label{eq:mccormick_4}
\chi^{(i \rightarrow j,i)}_t &\leq V^{i \rightarrow j}_t \underline{\ell}^i_t  + \bar{V}^{i \rightarrow j}_t  \ell^i_t - \bar{V}^{i \rightarrow j}_t  \underline{\ell}^i_t. 
\end{align}
\end{subequations}
Here, the upper and lower McCormick bounds, $[\underline{V}^{i \rightarrow j}_t, \bar{V}^{i \rightarrow j}_t] $ for $V^{i \rightarrow j}_t$, and $[\underline{\ell}^i_t, \bar{\ell}^i_t]$ for $\ell^i_t$, can depend on the timestep. This results in a linear program, with objective \eqref{eq:bilinear_objective_split} (with the bilinear term replaced by \eqref{eq:bilinear_objective_linearized_part}), and constraints \eqref{eq:level_dynamics}-\eqref{eq:vol_bounds}, \eqref{eq:mccormick_envelope}, and a similar set of McCormick bounds for each $\chi^{(i \rightarrow j,j)}_t$.

\subsection{Error Bound on McCormick Envelope Approximation}
We now calculate the maximum error from approximating the bilinear relation $\chi^{(i \rightarrow j,i)}_t = V^{i \rightarrow j}_t \ell^i_t$ by its McCormick envelope in \eqref{eq:mccormick_envelope}.  We drop the time, source, and destination indices in the subsequent analysis for cleaner notation.

For each bounding hyperplane in \eqref{eq:mccormick_envelope}, the points where the first derivative of the approximation error with respect to either $V$ or $\ell$ is zero have zero approximation error. For example, for the first hyperplane \eqref{eq:mccormick_1}, the approximation error is $(V \bar{\ell} + \bar{V}  \ell  - \bar{V}  \bar{\ell}) - V \ell$. The  first derivatives are zero where $\ell = \bar{\ell}$ or $V = \bar{V}$, where the error is also zero.

Thus, the largest errors must occur at the intersection of the bounding hyperplanes. Considering Figure \ref{fig:mccormick_envelope_visualization}, the largest overestimate must occur at the intersection of the two upper-bounding hyperplanes \eqref{eq:mccormick_3} and \eqref{eq:mccormick_4}, and the largest underestimate at the intersection of the two lower-bounding hyperplanes \eqref{eq:mccormick_1} and \eqref{eq:mccormick_2} (the intersection of upper- and lower-bounding hyperplanes, e.g., \eqref{eq:mccormick_1} and \eqref{eq:mccormick_3}, is an exact representation of the underlying function).

\begin{figure}[tbp] 
   \includegraphics[width=0.235\textwidth]{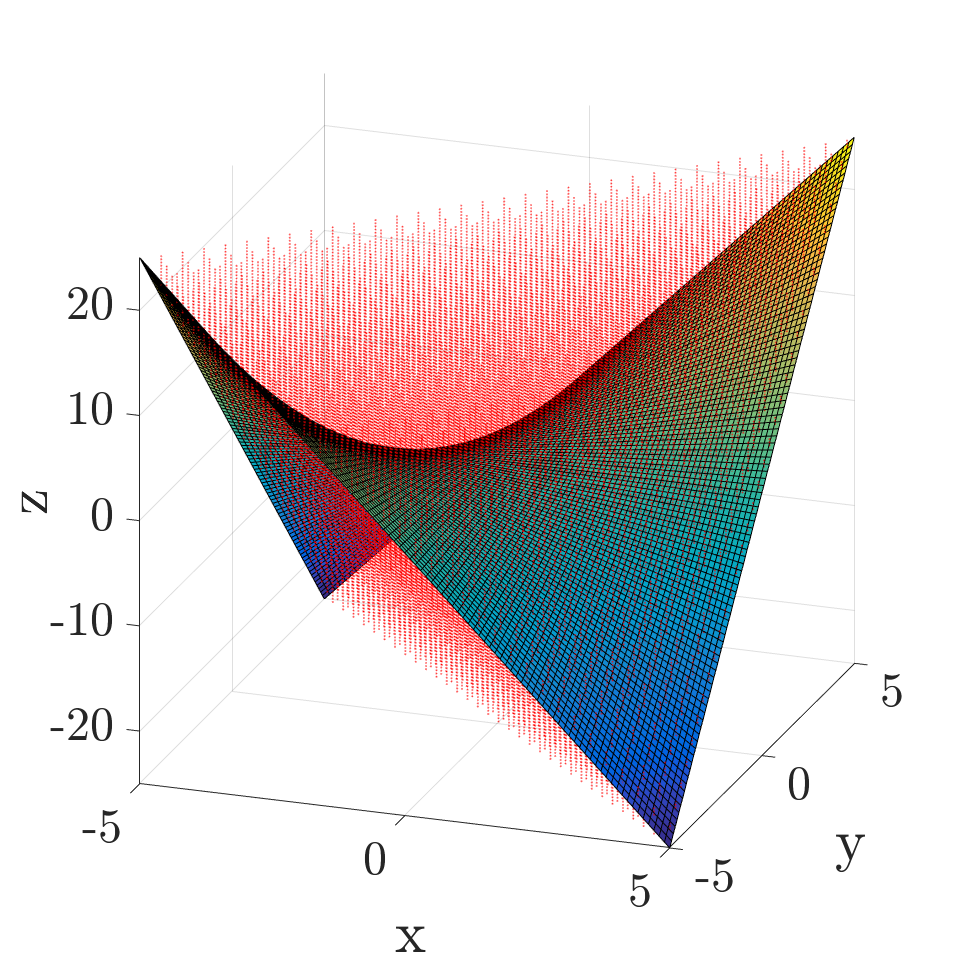} 
   \hfill
   \includegraphics[width=0.235\textwidth]{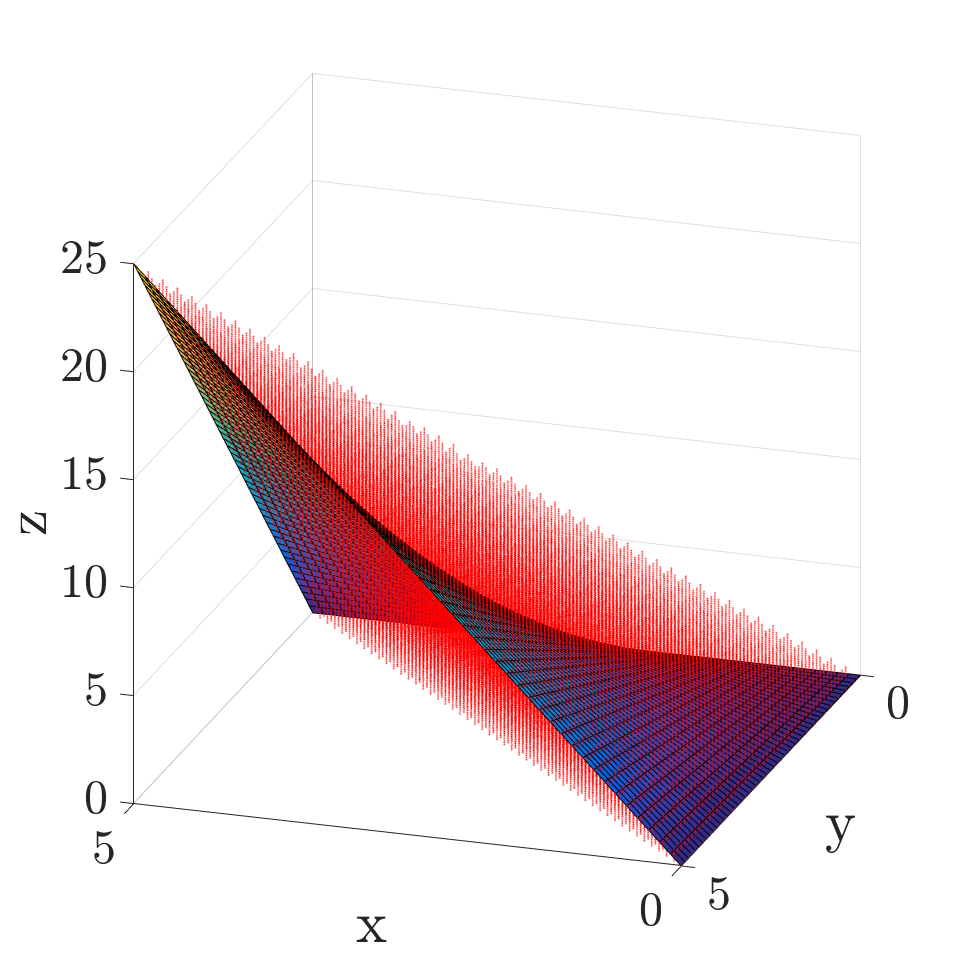}
   \caption{McCormick envelope convex approximation of $z = xy$, over various bounds.}
   \label{fig:mccormick_envelope_visualization}
\end{figure}

We determine the largest overestimate analytically. At the intersection of the two upper-bounding hyperplanes, 
$$V \bar{\ell} + \underline{V} \ell - \underline{V}  \bar{\ell} = V  \underline{\ell}  + \bar{V}  \ell - \bar{V}  \underline{\ell}.$$

Solving for $V$ in terms of $\ell$,
$$V_{cd}(\ell) = \frac{\ell (\bar{V} - \underline{V}) + \underline{V} \bar{\ell} - \bar{V} \underline{\ell}}{\bar{\ell} - \underline{\ell}}$$

The approximation error at the intersection of the two upper-bounding hyperplanes is
\begin{align*}
\bar{e}(\ell) &= (V_{cd}(\ell) \bar{\ell} + \underline{V} \ell - \underline{V}  \bar{\ell})  - V_{cd}(\ell) \ell 
\\
&= \left( \frac{\ell (\bar{V} - \underline{V}) + \underline{V} \bar{\ell} - \bar{V} \underline{\ell}}{\bar{\ell} - \underline{\ell}} \right) (\bar{\ell} - \ell) + \underline{V} \ell - \underline{V}  \bar{\ell}.
\end{align*}

Taking the derivative with respect to $\ell$, we have $\frac{d \bar{e}(\ell)}{d \ell} = -2 \ell + \bar{\ell} + \underline{\ell}$. The derivative of the approximation error is zero at $\ell^* = \left( \bar{\ell} + \underline{\ell} \right)/2$ with the corresponding 
$V^* = \left( \bar{V} + \underline{V} \right)/2$. This leads to a maximum overestimate of 
\begin{equation} \label{eq:max_approx_overestimate}
\bar{e}^* = (\bar{V} - \underline{V})(\bar{\ell} - \underline{\ell})/4.
\end{equation}
By similar calculations, or by symmetry, the maximum underestimate $\underline{e}^*$ occurs at the same $V^*$ and $\ell^*$, with a negative sign in the error.

\subsection{Error Bound on Two-Stage Approximation} 
We solve the two-stage approximate problem using Algorithm \ref{alg:mpc_ddp_algorithm}. Since the two-stage approximation differs from the exact problem only in the second stage, a bound on the two-stage approximation error can be found by summing the worst-case approximation error \eqref{eq:max_approx_overestimate} for each second-stage bilinear term in \eqref{eq:bilinear_objective_bilinear_part}. In the context of Assumption \ref{assumption:lp_vs_nlp_constraint_approximation}, at each second-stage timestep $t$, the worst-case bound between the exact and approximate objectives is
\begin{align} \label{eq:mccormick_approximation_delta_t}
\delta_t \! = \! \sum_{i=1}^N  \sum_{ j \in \mathcal{N}^{i \to}} \! \! \! \frac{|p_t \beta^{i \to j}|}{4} \! \left( \bar{V}^{i \to j}_t \! - \underline{V}^{i \to j}_t \right) \! \! \left( \bar{\ell}^i_t - \underline{\ell}^i_t  +  \bar{\ell}^j_t - \underline{\ell}^j_t \right). 
\end{align}
Solutions to the two-stage approximation are also feasible for the exact problem, so the map $M_t$ between solutions of the two problem formulations is simply identity.

 We assume that the first stage is solved exactly, i.e., \mbox{$\epsilon = 0$}. Denoting the approximate solution as $\underline{G}_0(x_0)$ and optimal solution as $G_0(x_0)$, and applying Theorem \ref{thm:two_stage_error_bound}, $\underline{G}_0(x_0)$ achieves a maximum underestimate of 
\begin{align} \label{eq:mccormick_approximation_error_bound}
\sum_{t=T_1}^{T-1} \sum_{i=1}^N  \sum_{ j \in \mathcal{N}^{i \to}} \! \! \! \frac{|p_t \beta^{i \to j}|}{4} \! \left( \bar{V}^{i \to j}_t \! - \underline{V}^{i \to j}_t \right) \! \! \left( \bar{\ell}^i_t - \underline{\ell}^i_t  +  \bar{\ell}^j_t - \underline{\ell}^j_t \right). 
\end{align}

The absolute limits on the reservoir levels \eqref{eq:level_bounds} and volume flows \eqref{eq:vol_bounds} can be used in the McCormick envelopes, but these crude estimates can be improved. Tighter bounds on the volume flows are not known a priori, but the bounds on the reservoir levels can be tightened by incorporating the system dynamics \eqref{eq:level_dynamics}, and assuming maximum inflows and outflows at each timestep:
\begin{align} \label{eq:improving_mccormick_bounds_1}
\bar{\ell}^i_t &= \min \left( \ell^i_0 + \frac{1}{\gamma^i} \sum_{s = 0}^{t-1} \sum_{j \in \mathcal{N}^{\to i}} \bar{V}^{j \rightarrow i}_s, \ \bar{\ell}^i \right)
\\ \label{eq:improving_mccormick_bounds_2}
\underline{\ell}^i_t &= \max \left( \ell^i_0 - \frac{1}{\gamma^i} \sum_{s = 0}^{t-1} \sum_{j \in \mathcal{N}^{i \to}} \bar{V}^{i \rightarrow j}_s, \ \underline{\ell}^{i} \right).
\end{align}

For the split-horizon problem, the bounds must be found relative to the starting level for the entire problem, rather than the starting level for the linear part. Otherwise, the approximation of the bilinear terms changes at each iteration, leading to a different linear program each iteration and voiding the convergence guarantee of Theorem \ref{thm:local_solution_ddp_convergence}.

\section{Numerical Results} \label{section:results}

\subsection{Experimental System}
We apply the split-horizon approximation and solution method of Algorithm~1 to the system of two reservoirs and an infinite-capacity basin depicted in Figure \ref{fig:test_system_diagram}. This example system is inspired by \cite{Borghetti2008}, which considers a single reservoir of the same volume and height used here. The reservoirs have identical capacities of $33 \times 10^6 \ \SI{}{\meter}^3$, and their bottoms are separated by a height of $h_0 = \SI{200}{\meter}$. The water volume in each reservoir is proportional to the water level relative to the reservoir bottom, with maximum levels $\bar{\ell}^a = \SI{85}{\meter}$ and $\bar{\ell}^b = \SI{100}{\meter} $ for the upper and lower reservoir, respectively. The reservoir bottoms are connected by a reversible $\SI{100}{\mega\watt}$ pump/turbine which operates with $\mu = 90\%$ one-way conversion efficiency. The lower reservoir is connected to an infinite basin ($\bar{\ell}^{bas} = \underline{\ell}^{bas} = \SI{0}{\meter}$) situated $\SI{300}{\meter}$ below its bottom, via another identical reversible $\SI{100}{\mega\watt}$ pump/turbine.  The reservoir levels are constrained between empty ($\underline{\ell}^{a} = \underline{\ell}^{b} = \SI{0}{\meter}$) and their maximums. The reservoirs are initially half-full.

\begin{figure} 
	\begin{centering}
	\begin{tikzpicture}
		
	\newcommand\mypool[8]{
\draw[line width=1pt] (#1) -- +(0,#2)coordinate[](a){} --node[above=0.2+#7cm, pos=0.5]{#6} +(#4,#2)coordinate[](#5){} -- +(#4,0);
\draw[<->,line width=1pt,thin] ([xshift=#4cm+0.2cm]a) -- ([xshift=#4cm+0.2cm,yshift=-#2cm]a)node[right=0.1mm, pos = 0.5](){#3};
\path[fill=blue!30](a) -- ([yshift=#7cm]a)-- ([yshift=#7cm]#5) -- (#5)--cycle;
\draw[line width=1pt]([yshift=1cm]#1) |-(#5);
}	
	
	\newcommand\mypoolturbine[9]{
\draw[line width=1pt] ([xshift=#9cm,yshift=-#8cm]#1) -- +(0,#2)coordinate[](a){} --node[above=0.2+#7cm, pos=0.5]{#6} +(#4,#2)coordinate[](#5){} -- +(#4,0);
\draw[<->,line width=1pt,thin] ([xshift=#4cm+0.2cm]a) -- ([xshift=#4cm+0.2cm,yshift=-#2cm]a)node[right=0.1mm, pos = 0.5](){#3};
\path[fill=blue!30](a) -- ([yshift=#7cm]a)-- ([yshift=#7cm]#5) -- (#5)--cycle;
\draw[implies-implies,line width=1pt,double] (#1) -- ([xshift=#9cm,yshift=-#8cm+#2cm]#1);
}		

\newcommand\mypoolturbineend[9]{
\draw[line width=1pt] ([xshift=#9cm,yshift=-#8cm]#1) -- +(0,#2)coordinate[](a){} --node[above=0.2+#7cm, pos=0.5]{#6} +(#4,#2)coordinate[](#5){} -- +(#4,0);
\path[fill=blue!30](a) -- ([yshift=#7cm]a)-- ([yshift=#7cm]#5) -- (#5)--cycle;
\draw[implies-implies,line width=1pt,double] (#1) -- ([xshift=#9cm,yshift=-#8cm+#2cm]#1);
\path[pattern=north east lines](a) -- ([yshift=#3cm]a)-- ([yshift=#3cm]#5) -- (#5)--cycle;
}	
	
\node[inner sep=0pt, outer sep=0pt] at (0,0) (O){};
\mypool{O}{-1}{$\ell^a$}{2}{B}{Res. a}{0.6}{0.2}
\mypoolturbine{B}{-1}{$\ell^b$}{2}{C}{Res. b}{0.5}{0.5}{0.7}
\mypoolturbineend{C}{-0.5}{-0.2}{3}{D}{Basin}{0.2}{0.5}{0.7}
	
	\end{tikzpicture}
	
	\caption{Test system with two reservoirs and reversible turbines/pumps, and an infinite basin.}
	\label{fig:test_system_diagram}
	\end{centering}
\end{figure}

The energy conversion parameters for \eqref{eq:bilinear_constraint_power} can be calculated from first principles, since the energy required to raise a given volume a height $h$ is $E = \rho g h$, with $\rho = \SI{1000}{\kg/\meter^3}$ and $g = \SI{9.81}{\meter/\second^2}$. Taking into account the one-way conversion efficiency $\mu$, the energy conversion parameters for \eqref{eq:bilinear_constraint_power} are $\alpha^{a \to b} = -k_{H_2O} \, h_0 \, \mu$, $\beta^{a \to b} = -k_{H_2O} \, \mu$, $\alpha^{b \to a} = k_{H_2O} \, h_0 / \, \mu$, and $\beta^{b \to a} = k_{H_2O}/ \, \mu$, where $k_{H_2O} = \SI{0.002725}{\kilo\watt}$h. The maximum flow rates in \eqref{eq:vol_bounds} are calculated as the flows that produce or consume $\SI{100}{\mega\watt}$ at the initial reservoir levels. Reservoir $a$ takes 156 hours to empty into Reservoir $b$ at maximum release flow, while Reservoir $b$ takes 283 hours to empty into the lower basin.

The terminal cost $c_T$ of water stored in reservoirs is chosen as the energy embodied in a unit volume of water at the beginning of the time horizon, multiplied by the average price throughout the time horizon. This energy includes conversion efficiency losses, and is relative to the zero potential level of the system. Thus, the value of water stored in the upper reservoir takes into account its potential passage through the lower reservoir as well. Price data are taken from the Swiss day-ahead EPEX spot market, starting on January 1, 2017 \cite{SwissSpotPrice}, and are assumed to be perfectly known over the entire horizon. 

\subsection{Solution of Nonlinear and Split-Horizon Problems}

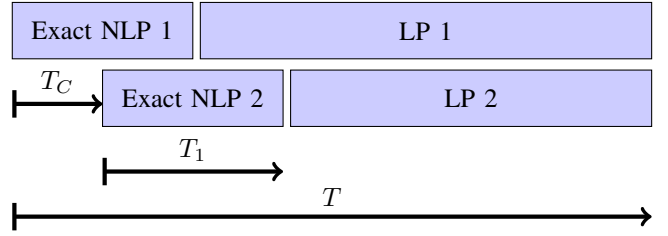
\begin{figure}[tbp]
	\begin{centering}
	\begin{tikzpicture}
        
        \draw[|->,ultra thick] (0,-0.3)--(8.5,-0.3) node[above,pos=.5]{$T$};     
        
        \draw[|->,ultra thick] (1.2,0.3)--(3.6,0.3) node[above,pos=.5]{$T_1$};   
        
        \draw[|->,ultra thick] (0,1.2)--(1.2,1.2) node[above,pos=.5]{$T_C$};   		
  		
		\filldraw[fill=blue!20, draw=black] (1.2,0.9) rectangle (3.6,1.65) node[pos=.5] {Exact NLP 2};
  		\filldraw[fill=blue!20, draw=black] (3.7,0.9) rectangle (8.5,1.65) node[pos=.5] {LP 2};
  		
  		\filldraw[fill=blue!20, draw=black] (0,1.8) rectangle (2.4,2.55) node[pos=.5] {Exact NLP 1};
  		\filldraw[fill=blue!20, draw=black] (2.5,1.8) rectangle (8.5,2.55) node[pos=.5] {LP 1};

	\end{tikzpicture}
	
	\caption{Depiction of shrinking-horizon simulation over two simulation steps. Simulation is run over total horizon $T$, with nonlinear first stage of length $T_1$, and control horizon of length $T_C$ where current solution is applied.}
	\label{fig:ddp_shrinking_horizon_method_illustration}
	\end{centering}
\end{figure}

We seek to minimize the exact NLP \eqref{eq:bilinear_problem}, with the total horizon $T = 20$ days. As a reference estimate of the global optimal value, we grid both the state and input space into 32 equally-spaced points and solve the approximate dynamic program using the DPM toolbox \cite{Sundstrom2009}. We compare this to the solution of the split-horizon problem (consisting of \eqref{eq:bilinear_problem} with modifications \eqref{eq:crossterm_linearization}, \eqref{eq:bilinear_objective_linearized_part}, and \eqref{eq:mccormick_envelope}) over a shrinking horizon, as depicted in Figure \ref{fig:ddp_shrinking_horizon_method_illustration}. Here, we solve the split-horizon problem and apply the first $T_C$ hours of the solution. We then move forward in time by $T_C$, compute a solution to the subsequent, shortened split-horizon problem, and so forth. Note that for the problem considered here, solutions to the LP are also feasible for the NLP, and thus the relative length of $T_C$ versus $T_1$ presents no issue for feasibility. 

When solving the split-horizon problem, the nonlinear first stage is also solved using the DPM toolbox. The linear second stage is solved using CPLEX 12.7.0. The simulations are run in \textsc{Matlab}, formulated by \textsc{YALMIP}, and solved on a six core Intel Core i7-5820K with 16 GB of RAM.

\begin{figure}[tp] 
	\begin{centering}
		\includegraphics[width=0.48\textwidth]{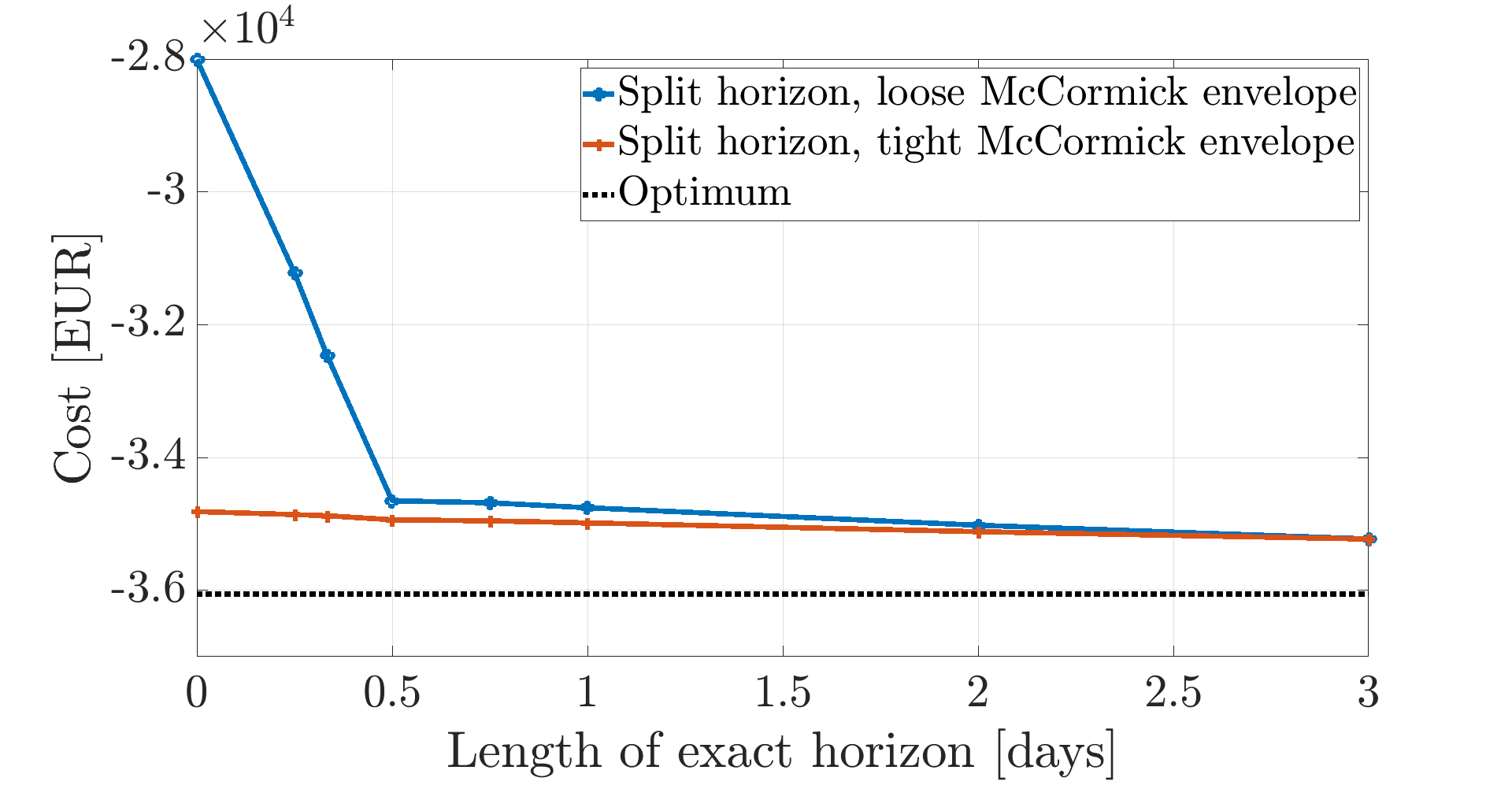} 
	\end{centering}
	\caption{Optimality of split-horizon DDP method over 20 days with varying length of exactly-modeled first stage, and control horizon of 12 hours. Split-horizon method uses McCormick envelopes in linear stage, with linearization limits either set to the given reservoir level limits (loose) or as in \eqref{eq:improving_mccormick_bounds_1} and \eqref{eq:improving_mccormick_bounds_2} (tight). Method applied to two-reservoir system of Figure \ref{fig:test_system_diagram}.}
	\label{fig:optimality_comparison_loose}
\end{figure}

In Figure \ref{fig:optimality_comparison_loose}, the two-stage problem is solved over various exact horizon lengths $T_1$. The figure shows that as $T_1$ increases (for a fixed total horizon), the cost decreases. When the McCormick bounds are set to the reservoir level limits, using the split-horizon method with $T_1 = 12$ hours results in a problem objective which is 23.7\% less than that from the linearized problem (where $T_1=0$ in the figure), and is within 3.9\% of the optimum. Since the suboptimality decreases with the exact horizon length, we would choose the longest $T_1$ that allows Algorithm \ref{alg:mpc_ddp_algorithm} to converge in the allotted time. For the hydro application here, the allotted time would typically be on the order of one hour, as in \cite{Abgottspon2016}.

\begin{figure}[tp] 
	\begin{centering}
		\includegraphics[width=0.48\textwidth]{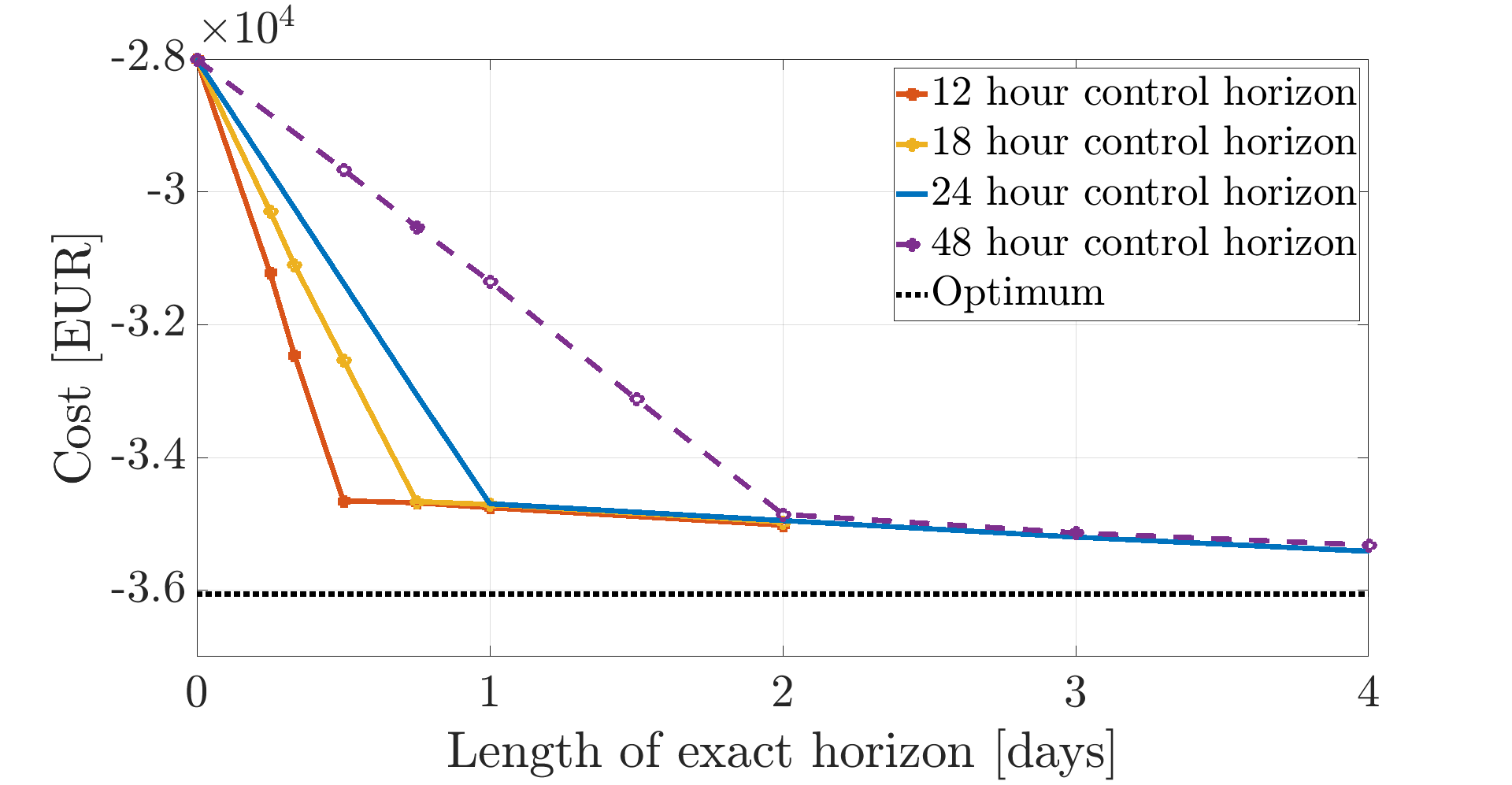} 
	\end{centering}
	\caption{Optimality of split-horizon DDP method over 20 days with varying length of exactly-modeled period. Results are for two-reservoir system. Control horizon of MPC decisions is varied.}
	\label{fig:optimality_comparison_loose_varied_decision_rate}
\end{figure}

When using the tightened McCormick bounds \eqref{eq:improving_mccormick_bounds_1} and \eqref{eq:improving_mccormick_bounds_2}, there is a 0.4\% improvement due to modeling 12 hours exactly. Thus, the utility of the split-horizon method depends on the level of accuracy in the linearization. Note that here, the improvement in linearization due to the tightened McCormick bounds is reduced as the length of the exact horizon increases.

The theoretical bounds from \eqref{eq:mccormick_approximation_error_bound} state that for the problem considered in Figure \ref{fig:optimality_comparison_loose}, if the two-stage problem with $T_1 = 12$ hours is solved to global optimality, the objective found for the first timestep is within $61.5\%$ of the optimum for the case of the loose McCormick bounds, and within $62.6\%$ of the optimum for the case of the tightened bounds. These bounds are conservative compared to the suboptimality achieved in simulation for two reasons. First, the McCormick envelope approximation considers the worst case combination of inputs, which rarely occurs in practice. This conservatism accumulates at each timestep, rendering bounds at more distant timesteps even more conservative. Second, the bounds are given for a single problem instance, and do not take into account that only part of each solution is used in the receding horizon setting.

In Figure \ref{fig:optimality_comparison_loose_varied_decision_rate}, we vary the control horizon $T_C$ over which the computed action is applied. As $T_1$ is also varied, a ``knee'' appears in the graph, coinciding with $T_C$. Since $T_C$ is usually a fixed problem parameter, this result suggests we should choose $T_1 \geq T_C$. The objective continues to improve as $T_1$ is increased, but not as significantly as before the ``knee." Also, as $T_C$ gets shorter, the objective improves. This is expected from the receding horizon context, as measurements from the true system are incorporated at a more frequent rate.

\begin{figure}[tp] 
	\begin{centering}
		\includegraphics[width=0.48\textwidth]{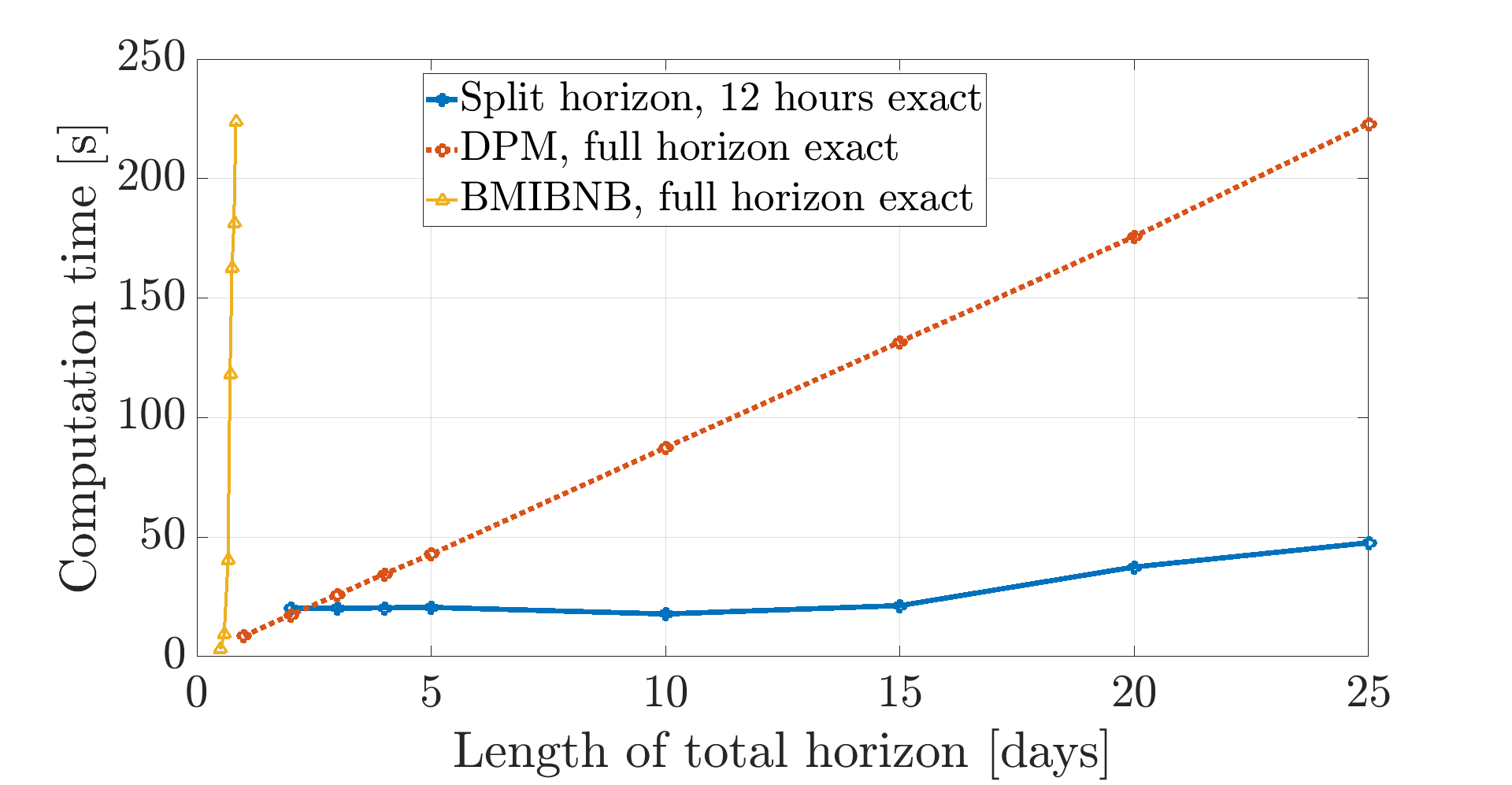} 
	\end{centering}
	\caption{Computation time for two-reservoir system of split-horizon DDP method with 12 hours exactly modeled, compared to solving full bilinear problem approximately using DPM dynamic programming toolbox and exactly using BMIBNB global solver.}
	\label{fig:computation_comparison}
\end{figure}

In Figure~\ref{fig:computation_comparison}, we compare the computation time for the split-horizon approximation  \eqref{eq:original_two_stage_problem} versus solving the exact bilinear problem \eqref{eq:bilinear_problem} using the DPM toolbox and BMIBNB solver. The split-horizon problem computation time is for solving the first instance of the problem i.e., we compute the solution for the full horizon once, and do not account for solving the problem repeatedly over the shrinking horizon. This reflects the envisioned receding horizon setting: in practice we would solve the problem once, use the solution for the first timestep, and then re-solve a new problem that incorporates updated problem data for the next timestep. We see that the split-horizon method with $T_1 = 12$ hours scales well with respect to the total horizon length. In practice, the increase in solution time depends mainly on that of the second stage LP, as the number of Benders cuts for the second stage tends to increase sublinearly as the second stage length grows. For example, the split horizon method in Figure~\ref{fig:computation_comparison} results in two cuts for total horizons less than five days, between three and four cuts from horizons from 5-19 days, and between four and five cuts for horizons up to 25 days. As expected theoretically, the computation time for solving the exact bilinear problem scales exponentially in horizon length when using the global BMIBNB solver, and scales linearly when solving approximately using the DPM toolbox. 

\subsection{Solution of Problems with Higher State Dimension}
For the two-reservoir system considered above, the DPM toolbox can solve the exact problem to within the tolerance of the given grid. However, the  chief disadvantage of dynamic programming is that the computation time scales exponentially in the problem dimension. When we modify the system of Figure~\ref{fig:test_system_diagram} to include an additional reservoir, increasing the state and input dimensions each to three, the DPM toolbox fails to return a solution due to reaching computer-specific RAM usage limits. It is possible to make the DP discretization coarser, but this comes at a cost to optimality. 

\begin{figure}[tp] 
	\begin{centering}
		\includegraphics[width=0.48\textwidth]{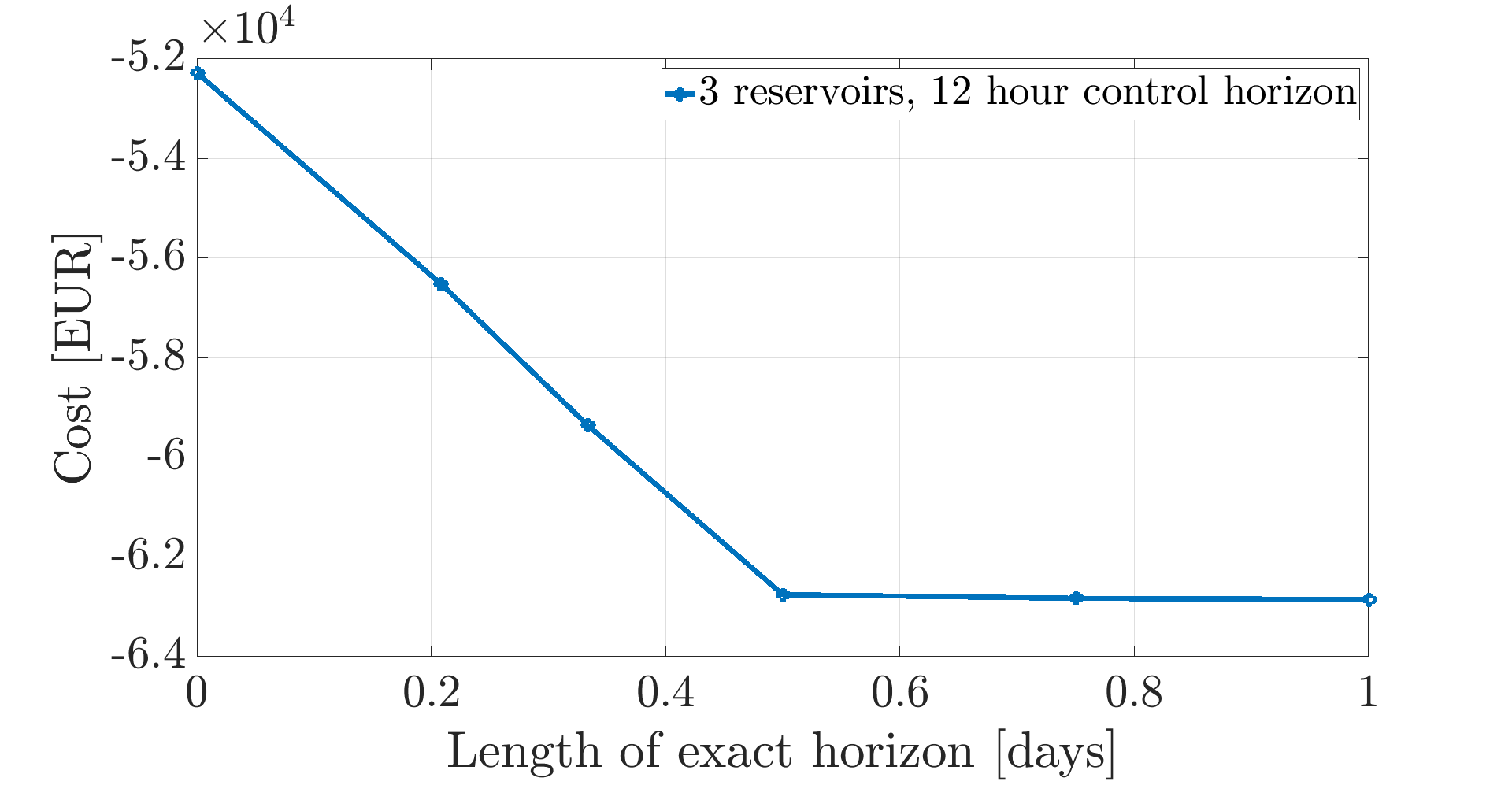} 
	\end{centering}
	\caption{Optimality of split-horizon DDP method over 20 days with varying length of exactly-modeled period. 12 hour control horizon used throughout. Results are for three reservoir system, where dimension of system is too large for DPM toolbox to solve.}
	\label{fig:optimality_comparison_loose_4_day}
\end{figure}

In contrast to dynamic programming, where one must solve over the entire state space, local solution methods can be used to solve the first-stage nonlinear problem of the split-horizon approximation. The length of the first stage can be chosen short enough so that it is computationally tractable to solve. The experimental results presented here suggest that using even short first stages for the split-horizon approximation leads to near-optimal solutions in a receding horizon setting. In Figure \ref{fig:optimality_comparison_loose_4_day}, we display the results of using the split-horizon method on the three reservoir system mentioned in the previous paragraph. Using the BMIBNB global solver (solved to a relative tolerance of $1 \times 10^{-6}$) with a control horizon of 12 hours, we found that the split-horizon method with an exact horizon of 12 hours improved the objective by 20\%, relative to the solution of the linearized problem. Note that solving the 20 day exact NLP to optimality is computationally intractable. 

\subsection{Comparison to Multi-cell Approximation} \label{section:multicell_comparison}
For a comparison with other methods which consider local approximations of the bilinearity (\cite{Diniz2008}-\cite{Cerisola2012}), we also implement the multi-cell McCormick envelope approximation method of \cite{Cerisola2012}. The method is used to approximate the bilinearity \eqref{eq:bilinear_constraint_power} over the entire horizon. Here, we choose to split each volume flow and reservoir level variable into two equal intervals, and generate a McCormick envelope for each of the four resulting cells. For the two-reservoir model of Figure \ref{fig:test_system_diagram}, this introduces one binary variable per timestep for each of the four devices and two reservoirs. Considering the 20 day horizon with hourly timesteps, this results in a mixed-integer linear program (MILP) with 2880 binary variables. We solve the MILP in the receding horizon setting of Figure \ref{fig:ddp_shrinking_horizon_method_illustration}, allotting a solution time equivalent to the time used by the split-horizon method for a given exact horizon length.

We produce Figure \ref{fig:optimality_comparison_multicell} by varying the exact horizon length of the split-horizon method. This can be viewed as exploration of a Pareto front between computation time and solution optimality. We see that the multi-cell McCormick method performs worse than the split-horizon method. For a 20 day horizon, when both methods use a computation time of 32.4 seconds (the solution time of the split-horizon method with an exact horizon of 0.5 days), the multi-cell method achieves an objective which is within 11.5\% of the optimum, while the split-horizon method is within 3.9\%. Moreover, we expect the computation time of the split-horizon method to scale better with increasing problem size, compared to the exponential complexity of solving a larger MILP.

\begin{figure}[tbp] 
	\begin{centering}
		\includegraphics[width=0.48\textwidth]{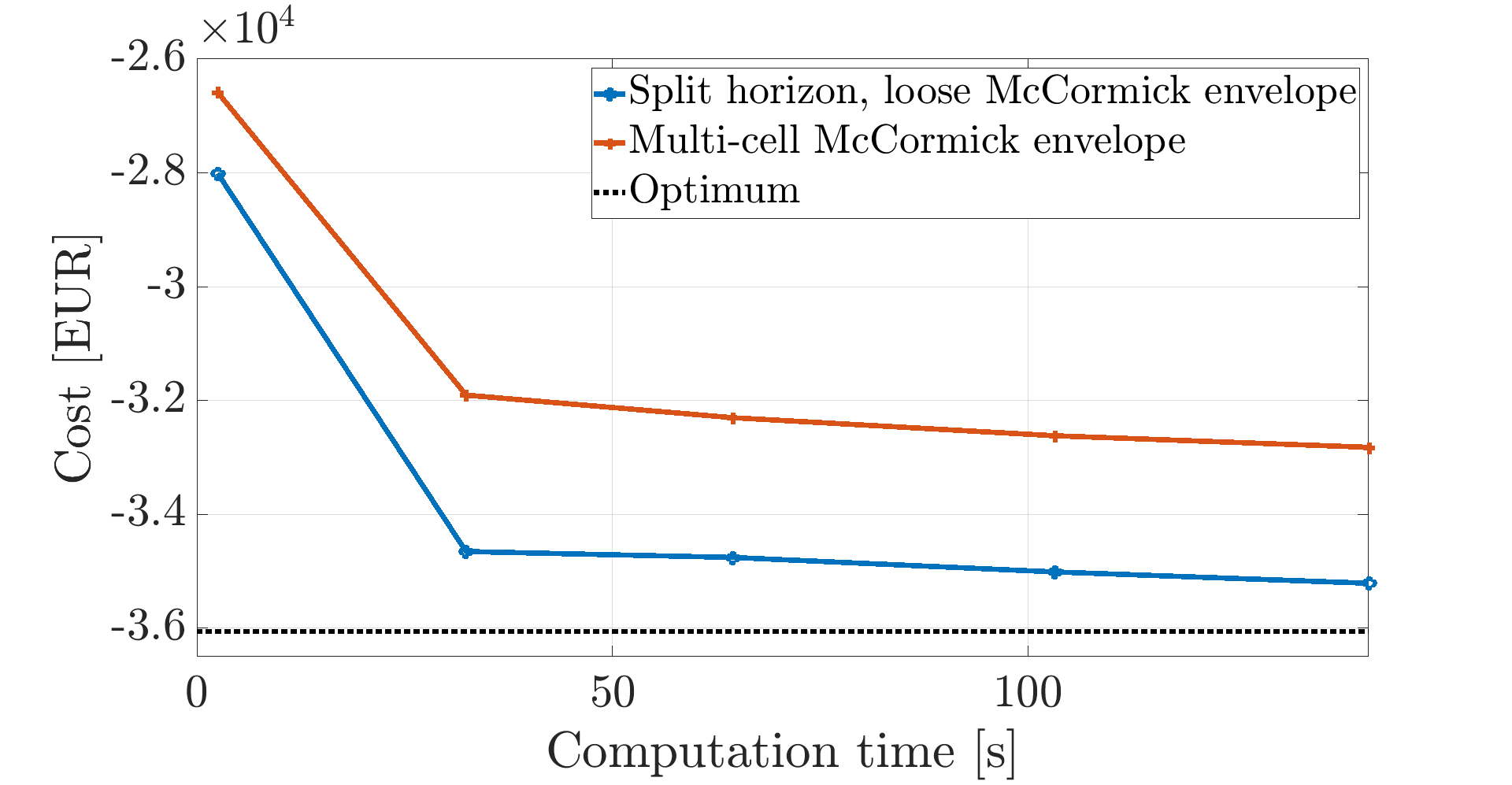} 
	\end{centering}
	\caption{Pareto curves for split-horizon DDP method versus multi-cell McCormick method \cite{Cerisola2012}, with volume flow and reservoir level variables each split into two intervals. Data points result from varying length of exactly-modeled first stage in DDP method, taking computation time, and setting this as maximum computation time for multi-cell method. Displayed is performance over 20 days with 12 hour control horizon, for two-reservoir system of Fig.~\ref{fig:test_system_diagram}.}
	\label{fig:optimality_comparison_multicell}
\end{figure}

\section{Conclusion}
We have shown that the DDP-based Algorithm \ref{alg:mpc_ddp_algorithm}, when applied to a split-horizon approximation of a bilinear hydro optimization problem, achieves accurate and computationally efficient results compared to solving either the full exact nonlinear problem or its linearization. When the optimization is conducted in a receding horizon manner with exact horizons on the order of the control horizon, the performance is nearly optimal, with significant computational savings when modeling higher-dimensional systems. 

The experimental results presented here demonstrate that accurately modeling the first part of the horizon can significantly improve the result relative to a linearized model. This improvement is due to the mismatch between the linearized and true model. Although better linearizations reduce the need for the split-horizon method, it might still be advantageous to use DDP with local linearizations like \cite{Cerisola2012} in the second stage. However, additional research along the lines of \cite{Zou2018} is needed to use DDP with the resulting integer variables. We note that the success of the split-horizon method depends on the type of nonlinearity considered, and how well applicable solution methods scale as the first stage length increases. 

We also derived bounds for the suboptimality of the split-horizon method. These bounds were found to be quite conservative for the problem here. Error bounds can be tighter for other problem cases and classes. For example, the suboptimality relative to an integer relaxation of an MILP can also be bounded, with a tightness that depends on the problem parameters. However, our case study here shows that while the bounds on a McCormick envelope approximation of a bilinearity are conservative, the split-horizon method used in a receding horizon setting can still achieve good results. Future research could explore how these bounds differ for a receding horizon setting, compared to the single-period optimization for which they were derived. 

The split-horizon method is well suited to be extended to stochastic problems e.g., for uncertain prices here. As noted above, stochasticity, perhaps incorporated using a scenario tree approach as in \cite{Rebennack2016}, further increases the complexity of solving the problem exactly over long horizons.

%

\section*{Acknowledgment}

The authors would like to thank Roy Smith and members of the Building Control Group at the ETH Automatic Control Laboratory for their input into this work. This work is supported by the SCCER FEEB\&D project and the European Research Council under the project OCAL, ERC-2017-ADG-787845.

\ifCLASSOPTIONcaptionsoff
  \newpage
\fi

\end{document}